\newcommand{\spa}{\medskip}
\newcommand{\stack}[1]{\cite[\href{https://stacks.math.columbia.edu/tag/#1}{Tag #1}]{Stacks}}
\newcommand{\ag}{{\mbox{\larger[-4]$\dag$}}}
\newcommand{\Q}{\mathbb{Q}}
\newcommand{\F}{\mathbb{F}}
\newcommand{\Z}{\mathbb{Z}}
\newcommand{\N}{\mathbb{N}}
\newcommand{\bt}{$p$-divisible\ }
\newcommand{\BT}{\mathbf{BT}}
\newcommand{\DM}{\mathbf{DM}}
\newcommand{\Foi}{\Fisoc^\mathrm{\ag}}
\newcommand{\iso}{\xrightarrow{\sim}}
\newcommand{\Zp}{\mathbb{Z}_p}
\newcommand{\Qp}{{\mathbb{Q}_p}}
\newcommand{\QQ}{\mathbb{Q}}
\newcommand{\tors}{\mathrm{tors}}
\newcommand{\Fp}{{\F_p}}
\newcommand{\calE}{\mathcal{E}}
\newcommand{\calF}{\mathcal{F}}
\newcommand{\calG}{\mathcal{G}}
\newcommand{\calH}{\mathcal{H}}
\newcommand{\calO}{\mathcal{O}}
\newcommand{\calM}{\mathcal{M}}
\newcommand{\Spec}{\mathrm{Spec}}
\newcommand{\Gal}{\mathrm{Gal}}
\newcommand{\End}{\mathrm{End}}
\newcommand{\GL}{\mathrm{GL}}
\newcommand{\et}{{\mathrm{\acute{e}t}}}
\newcommand{\gp}{\mathrm{gp}}
\newcommand{\Fisoc}{\mathbf{F\textrm{-}Isoc}}
\begin{document}

	\newtheorem{theo}[subsection]{Theorem}
	\newtheorem*{theo*}{Theorem}
	\newtheorem{ques}[subsection]{Question}
	\newtheorem*{ques*}{Question}
	\newtheorem{conj}[subsection]{Conjecture}
	\newtheorem{prop}[subsection]{Proposition}
	\newtheorem{lemm}[subsection]{Lemma}
	\newtheorem*{lemm*}{Lemma}
	\newtheorem{coro}[subsection]{Corollary}
	\newtheorem*{coro*}{Corollary}

	\theoremstyle{definition}
	\newtheorem{defi}[subsection]{Definition}
	\newtheorem*{defi*}{Definition}
	\newtheorem{hypo}[subsection]{Hypothesis}
	\newtheorem{rema}[subsection]{Remark}
	\newtheorem{exam}[subsection]{Example}
	\newtheorem{nota}[subsection]{Notation}
	\newtheorem{cons}[subsection]{Construction}
	
	\numberwithin{equation}{section}
	
	\title{Logarithmic Dieudonné theory and overconvergent extensions}
	\date{\today}
	\makeatletter
\@namedef{subjclassname@2020}{%
	\textup{2020} Mathematics Subject Classification}
\makeatother

\subjclass[2020]{14A21, 14F30, 14G17, 14L05}

\keywords{$p$-divisible group, $F$-isocrystal, slope filtration, logarithmic geometry}

	\author{Marco D'Addezio}
\address{Institut de Recherche Math\'ematique Avanc\'ee (IRMA), Universit\'e de Strasbourg, 7 rue Ren\'e-Descartes, 67000 Strasbourg, France}
\email{daddezio@unistra.fr}

	\begin{abstract}
	In the proof of Crew's parabolicity conjecture, we established a key property concerning the slopes of $\dagger$-hulls of $F$-isocrystals, extending a result of Tsuzuki. This article presents an alternative proof of this theorem for a specific class of $F$-isocrystals. The central ingredient is a local extension property for étale $p$-divisible subgroups. To relate $p$-divisible groups and overconvergent $F$-isocrystals, we employ logarithmic Dieudonné theory, as introduced by Kato and further developed by Inoue. Over curves, this leads to an equivalence between the category of potentially semi-stable $p$-divisible groups and overconvergent $F$-isocrystals with slopes in the interval $[0,1]$.

	\end{abstract}
	
	\maketitle
	\tableofcontents

	\section{Introduction}
Let $k$ be a perfect field of positive characteristic $p$.  For a smooth variety $X$ over $k$, the category $\mathbf{F\textrm{-}Isoc}(X)$ of (convergent) $F$-isocrystals and the category $\mathbf{F\textrm{-}Isoc}^\ag(X)$ of overconvergent $F$-isocrystals are the categories of coefficients of convergent and overconvergent cohomology, as introduced by Berthelot. By \cite{KedFull}, the natural functor $\iota\colon\mathbf{F\textrm{-}Isoc}^\ag(X)\hookrightarrow\mathbf{F\textrm{-}Isoc}(X)$ is fully faithful. An object in the essential image of $\iota$ is called \textit{$\dag$-extendable}.  A basic question is to understand which convergent $F$-isocrystals are $\dag$-extendable.  This article studies the overconvergence of quotients of a $\dag$-extendable $F$-isocrystal under a certain slope condition.  Our main result is the following theorem.

\begin{theo}[{Theorem \ref{a-UR01:t}}]\label{intro-a-UR01:t}
	Let $\calE$ be a $\dag$-extendable $F$-isocrystal over $X$ whose $p$-adic slopes lie in an interval $[s,s+1]$ for some $s\in\QQ$.  Then every quotient of $(\calE,\Phi_\calE)$ in $\mathbf{F\textrm{-}Isoc}(X)$ which is isoclinic of slope $s$ is again $\dag$-extendable.
\end{theo}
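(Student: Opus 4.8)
The plan is to transport the statement, through logarithmic Dieudonné theory, into an extension problem for étale $p$-divisible groups across the boundary of a compactification, and to settle that problem locally. First I would reduce to the case $s=0$ and to $X$ a smooth curve. Replacing the Frobenius structure $\Phi_\calE$ by a suitable power, to clear the denominator of the rational number $s$, and tensoring $\calE$ with a rank-one isoclinic object of slope $-s$, shifts the slope interval to $[0,1]$ and carries the isoclinic slope-$s$ quotient to an étale (slope-$0$) quotient. Such a twist is itself overconvergent, and $\dag$-extendability is stable under tensoring with overconvergent objects and under Frobenius descent, so nothing is lost. Using that $\dag$-extendability may be checked after restriction to smooth curves, I then assume that $X$ is a curve, which is the setting in which the logarithmic Dieudonné dictionary is available.

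\textbf{Dieudonn\'e translation.} With $X$ a curve and the slopes of $\calE$ lying in $[0,1]$, I would invoke the equivalence between potentially semi-stable $p$-divisible groups over $X$ and overconvergent $F$-isocrystals with slopes in $[0,1]$. Under it the overconvergent $\calE$ corresponds to a potentially semi-stable $p$-divisible group $G$, and the slope-$0$ piece of $\calE$ matches the étale part of $G$ in the connected--étale sequence; in particular the isoclinic slope-$0$ quotient $\calQ$ is governed by $G^{\et}$. It therefore suffices to prove that this étale part is again potentially semi-stable, for then its associated isocrystal, namely $\calQ$, is overconvergent.

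\textbf{Local extension (the heart).} Writing $X=\overline{X}\setminus S$, potential semi-stability is a condition at each puncture $x\in S$: after a finite cover the group must extend to a logarithmic $p$-divisible group across $x$. Since $G$ is potentially semi-stable it extends, and the task is to propagate the extension to its étale part. After the appropriate Cartier duality this is precisely the local extension property for étale $p$-divisible subgroups that is the central ingredient of the paper: over the complete local ring at $x$, an étale $p$-divisible subgroup of the generic fibre of the logarithmic $p$-divisible group extends, after a finite extension of the base, to an étale $p$-divisible subgroup over the whole formal disc. Applying this at every puncture yields the sought extension and hence the overconvergence of $\calQ$.

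\textbf{Main obstacle.} The crux is the purely local statement just quoted; the remaining steps are formal once the Dieudonné dictionary of the earlier sections is granted. Its difficulty is that extending an étale subgroup across a puncture amounts to controlling the local monodromy of the associated Galois representation, and it is here that the width-one slope hypothesis is indispensable: it is exactly what places $\calE$, after twisting, in the range $[0,1]$ that is governed by a single $p$-divisible group, so that the connected--étale sequence and logarithmic Dieudonné theory can be brought to bear at all. Absent this hypothesis no single $p$-divisible group controls $\calE$, and the argument has no foothold.
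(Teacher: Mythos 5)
Your overall strategy --- normalise to $s=0$, pass through the Dieudonn\'e dictionary to a potentially semi-stable $p$-divisible group, convert the unit-root quotient into an \'etale $p$-divisible subgroup, and conclude by the local extension property (Tate's Proposition 12 refined to the semi-stable case) --- is the paper's. The genuine gap is your first reduction: you assert that $\dag$-extendability ``may be checked after restriction to smooth curves'' and thereupon assume $X$ is a curve. No such cut-by-curves criterion is proved or invoked in the paper, and for a general convergent $F$-isocrystal none is available; moreover curves over $k$ only probe closed points of the boundary, not the generic points of boundary divisors, which is where local monodromy must be controlled once $\dim X>1$. The paper instead exploits the fact that the quotient $(\calF,\Phi_\calF)$ is unit-root: by Crew's theorem \cite[Thm. 2.1]{Crew87} it is a continuous representation $\rho$ of $\pi_1^{\et}(X)$, and by Tsuzuki's finite local monodromy theorem in Kedlaya's form \cite[Thm. 2.3.7]{KedSwanII} its overconvergence is equivalent to the finiteness of $\rho(f_*\Gal(\bar{\Omega}/\Omega))$ for every $f\colon\Spec(\Omega)\to X$ with $\Omega=\Omega_0((t))$ and $\Omega_0$ algebraically closed. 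The Dieudonn\'e translation and the \'etale-subgroup extension are then carried out over each such local field $\Omega$, using the $k((t))$-version of Theorem \ref{DM:t}, rather than over a global curve. Your argument becomes correct if you replace ``restriction to curves'' by this criterion; as written, the reduction is unsupported.

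Two smaller points. First, the passage from the quotient $\calQ$ to ``the \'etale part $G^{\et}$ of the connected--\'etale sequence'' is imprecise: under the contravariant functor $\mathbb{D}_{\Qp}$ the quotient $\calE_\Omega\twoheadrightarrow\calF_\Omega$ corresponds to an \'etale $p$-divisible \emph{subgroup} $\calH\subseteq\calG$, and it is $\calH$, not the maximal \'etale quotient, to which the extension property is applied; no Cartier duality is needed, only the contravariance of $\mathbb{D}$. Second, the local statement you quote is established by first observing that in characteristic $p$ the map $\calH_\eta\to\calG_\eta/\calG_\eta^{\mu}$ is injective (an \'etale group meets a multiplicative one trivially), so that Tate's Proposition 12 applies to the good-reduction group $\calG_\eta/\calG_\eta^{\mu}$; this intermediate reduction should be made explicit, since Proposition 12 concerns subgroups of groups with good reduction, not of merely semi-stable ones.
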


We use the following local extension property for étale subgroups of $p$-divisible groups proved by Tate.

\begin{prop}[{\cite[Prop. 12]{Tat67}}]\label{good-red-et-intro:p}Let $R$ be a discrete valuation ring. For a $p$-divisible group $\calG_R$ over $R$, every étale \bt subgroup $\calH_\eta\subseteq\calG_\eta$ of the generic fibre has good reduction.
\end{prop}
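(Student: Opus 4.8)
The plan is to realise the candidate extension as a schematic closure and then to prove that this closure is étale. Throughout I may replace $R$ by its completion: schematic closure is compatible with the faithfully flat base change $R\to\widehat{R}$, and flatness, finiteness, order and étaleness all descend along it, so it suffices to treat $\widehat{R}$. Writing $\eta$ and $s$ for the generic and special points, I first form, for every $n\ge 1$, the schematic closure $\calH_n\subseteq\calG_R[p^n]$ of the finite subgroup $\calH_\eta[p^n]\subseteq(\calG_R[p^n])_\eta$. Since $\calG_R[p^n]$ is finite flat over the Dedekind ring $R$ and schematic closures are $R$-torsion-free, each $\calH_n$ is a finite flat closed subgroup scheme, and by schematic density of the generic fibre together with separatedness it is stable under the group laws. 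As the order of a finite flat group scheme is locally constant, $\calH_n$ has order $p^{nh}$, where $h$ is the height of $\calH_\eta$.

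Next I would check that the system $\calH_R:=(\calH_n)_n$ is a $p$-divisible subgroup of $\calG_R$. The identities $\calH_n=\calH_{n+1}[p^n]$ and the exactness of $0\to\calH_m\to\calH_{n+m}\xrightarrow{[p^m]}\calH_n\to 0$ hold on the generic fibre because $\calH_\eta$ is $p$-divisible; since each term is $R$-flat of the expected order and $\calH_{n+1}[p^n]$ is itself the schematic closure of $\calH_\eta[p^n]$, uniqueness of schematic closures propagates these relations over $R$. This is the standard fact that over a Dedekind base the schematic closure of a sub-$p$-divisible group of the generic fibre is again a $p$-divisible subgroup, and is moreover the unique extension as a subgroup; in particular ``good reduction of $\calH_\eta$'' is equivalent to the étaleness of this canonical $\calH_R$.

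The heart of the matter — and the step I expect to be the main obstacle — is to show that $\calH_R$ is étale over $R$, equivalently that its special fibre $\calH_s$ is étale. Over the complete local ring $R$ the group $\calH_R$ sits in a connected--étale sequence $0\to\calH_R^{0}\to\calH_R\to\calH_R^{\et}\to 0$, and it suffices to prove that the connected part $\calH_R^{0}$ vanishes. I would argue that its generic fibre is trivial: the finite layers of the formal part are infinitesimal, hence remain connected after the base change $R\to K$, so $(\calH_R^{0})_\eta$ is a connected subgroup of the étale group $\calH_\eta$ and is therefore trivial. Since $\calH_R^{0}$ is a $p$-divisible group whose height is locally constant on the connected scheme $\Spec R$, a trivial generic fibre forces $\calH_R^{0}=0$, whence $\calH_R=\calH_R^{\et}$ is étale, which is exactly the asserted good reduction. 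The two delicate points are the flatness input ensuring that the schematic closures genuinely assemble into a $p$-divisible group, and the preservation of connectedness of the formal part on the generic fibre; this last feature is what makes étaleness of $\calH_\eta$ bite, and it relies on $K$ having characteristic $p$ — over a mixed-characteristic base the connected group $\mu_{p^\infty}$ acquires an étale generic fibre, so the method, and indeed the conclusion, can fail there.
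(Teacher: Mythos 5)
Your proposal has a genuine gap, and it sits exactly at the step you yourself flag as "the flatness input ensuring that the schematic closures genuinely assemble into a $p$-divisible group". The assertion that $\mathcal{H}_{n+1}[p^n]$ "is itself the schematic closure of $\mathcal{H}_\eta[p^n]$" is unjustified: $\mathcal{H}_{n+1}[p^n]$ is a closed subgroup scheme with the correct generic fibre, so it \emph{contains} the schematic closure $\mathcal{H}_n$, but kernels of morphisms of finite flat group schemes need not be $R$-flat, so "uniqueness of schematic closures" does not apply and equality is not automatic. The "standard fact" you invoke — that over a Dedekind base the schematic closures of a sub-$p$-divisible group of the generic fibre again form a $p$-divisible group — is essentially the content of the proposition itself and is not true level-by-level without correction. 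The paper's proof explicitly records that the system $(\mathcal{H}_n)$ "may not yet be $p$-divisible" and repairs it: the graded pieces $\mathcal{D}_n=\mathcal{H}_{n+1}/\mathcal{H}_n$ are finite flat, killed by $p$, with étale generic fibre $\mathcal{H}_\eta[p]$, so their coordinate rings form an increasing chain of $R$-orders inside the étale $K$-algebra of $\mathcal{H}_\eta[p]$; finiteness of the integral closure forces stabilization at some $n_0$, and the shifted quotient system $\tilde{\mathcal{H}}_n=\mathcal{H}_{n_0+n}/\mathcal{H}_{n_0}$ is then $p$-divisible and is the desired model. Note that this is the \emph{only} place where the étaleness of $\mathcal{H}_\eta$ enters; in your write-up the étale hypothesis is consumed only in the last step, which should itself have raised a flag.

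That last step is the second problem: it answers the wrong question. In the paper, "good reduction" means that $\mathcal{H}_\eta$ extends to \emph{some} $p$-divisible group over $R$; it is not equivalent to étaleness of the extension, and once one has a $p$-divisible system over $R$ with generic fibre $\mathcal{H}_\eta$ one is done, so the connected--étale analysis is superfluous. Worse, your closing claim that "the conclusion can fail" over a mixed-characteristic base is incorrect: the proposition is stated for an arbitrary discrete valuation ring, and Tate's argument as reproduced in the paper works there. For instance $(\mu_{p^\infty})_{\Qp}$ is an étale $p$-divisible group over $\Qp$ with good reduction, its model $\mu_{p^\infty}$ over $\Zp$ simply failing to be étale. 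What breaks in mixed characteristic is only your reformulation of the conclusion, not the conclusion itself.
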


To connect $p$-divisible groups with overconvergent $F$-isocrystals we use \textit{log Dieudonné theory}, introduced by Kato \cite{Kat23} and extended by Inoue \cite{Ino25}. Their theory works particularly well for \textit{Frobenius‑smooth log schemes} (Definition \ref{d:FrobeniusLogSmooth}) and provides an equivalence (Theorem \ref{KatoEquivalence:t}). Thanks to this, we prove that, over a curve, there is an equivalence between \textit{potentially semi‑stable $p$-divisible groups}, defined up-to-isogeny, and overconvergent $F$-isocrystals with slopes in $[0,1]$.

\begin{theo}[{Theorem \ref{DM:t}}]\label{intro-DM:t}
	Let $X$ be a smooth curve over $k$. There exists an equivalence
	$$\mathbb{D}^\ag:\BT^{\mathrm{pst}}(X)_\Qp\iso \Foi(X)^{[0,1]}$$ of $\Qp$-linear abelian categories, where $\BT^{\mathrm{pst}}(X)$ denotes the category of potentially semi-stable $p$-divisible groups over $X$.
\end{theo}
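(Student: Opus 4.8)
The plan is to build $\DD^\ag$ out of log Dieudonné theory on a log compactification of $X$, and then to pin down its essential image and full faithfulness by comparison with the classical convergent Dieudonné equivalence. \textbf{Construction.} Let $\Xbar$ be the smooth compactification of $X$, equipped with the log structure attached to the reduced boundary $D=\Xbar\setminus X$; this $(\Xbar,D)$ is Frobenius-smooth in the sense of Definition \ref{d:FrobeniusLogSmooth}. Given $\calG\in\BT^{\mathrm{pst}}(X)$, by definition there is a finite cover $\pi\colon X'\to X$, finite étale over $X$ and tamely ramified at the cusps, such that $\pi^*\calG$ acquires semi-stable reduction, i.e.\ extends to a log \bt group $\calG'$ over the log curve $(\Xbar',D')$. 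Applying the Kato--Inoue equivalence (Theorem \ref{KatoEquivalence:t}) to $\calG'$ yields a log Dieudonné $F$-crystal on $(\Xbar',D')_\crys$; I restrict its isogeny class to $X'$ and reinterpret it, through the comparison between log $F$-isocrystals with nilpotent residues on a log-smooth compactification and overconvergent $F$-isocrystals on the open part, as an object of $\Foi(X')$. Étale descent for overconvergent $F$-isocrystals along $\pi$, using the descent datum carried by $\calG$, then produces $\DD^\ag(\calG)\in\Foi(X)$. The slope bound is automatic: Frobenius and Verschiebung of a \bt group force its Dieudonné crystal to have slopes in $[0,1]$, so $\DD^\ag(\calG)\in\Foi(X)^{[0,1]}$.

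\textbf{Full faithfulness.} The idea is to reduce to the convergent theory. Forgetting overconvergence and restricting the log crystal to $X$ identifies $\iota\circ\DD^\ag$ with the convergent Dieudonné functor $\DD\colon\BT(X)_\Qp\iso\Fisoc(X)^{[0,1]}$ of de Jong and Berthelot--Messing, which is an equivalence. Since $\BT^{\mathrm{pst}}(X)_\Qp$ is a full subcategory of $\BT(X)_\Qp$ and $\iota$ is fully faithful by \cite{KedFull}, for pst groups $\calG_1,\calG_2$ one obtains
$$\Hom(\DD^\ag\calG_1,\DD^\ag\calG_2)=\Hom(\iota\DD^\ag\calG_1,\iota\DD^\ag\calG_2)=\Hom(\DD\calG_1,\DD\calG_2)=\Hom(\calG_1,\calG_2),$$
so that $\DD^\ag$ is fully faithful; $\Qp$-linearity and exactness follow from those of log Dieudonné theory, of $\iota$, and of the passage to isogeny categories.

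\textbf{Essential surjectivity and the main obstacle.} Given $\calF\in\Foi(X)^{[0,1]}$, the isocrystal $\iota\calF$ has slopes in $[0,1]$, so the convergent equivalence yields a \bt group $\calG$ with $\DD\calG\cong\iota\calF$; it remains to prove $\calG$ is potentially semi-stable. This is the heart of the matter. By the $p$-adic local monodromy theorem, overconvergence of $\calF$ makes its local monodromy at each cusp quasi-unipotent; after pullback along a suitable finite cover $\pi\colon X'\to X$ this monodromy becomes unipotent, so $\pi^*\calF$ extends to a log $F$-isocrystal with nilpotent residues on $(\Xbar',D')$. Feeding this into the Kato--Inoue equivalence in reverse produces a log \bt group over $(\Xbar',D')$ extending $\pi^*\calG$, i.e.\ $\pi^*\calG$ has semi-stable reduction and $\calG$ is pst. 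I expect the main obstacle to be exactly this dictionary: matching ``semi-stable reduction of the \bt group'' with ``unipotent local monodromy / nilpotent residue of the associated log $F$-isocrystal'' in both directions, and checking that the log Dieudonné crystal on $(\Xbar,D)$ restricts to a genuinely \emph{overconvergent}, not merely convergent, $F$-isocrystal on $X$. Once this comparison is established and shown to be compatible with descent along $\pi$ and with the slope filtration, the remaining verifications are formal.
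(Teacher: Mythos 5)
Your overall architecture matches the paper's: compactify, use Kedlaya's semistable reduction theorem for overconvergent $F$-isocrystals on a curve to extend (after a cover) to a log $F$-isocrystal on $(\Xbar,D)$, pass through the Kato--Inoue equivalence, and translate ``log $p$-divisible group over the boundary'' into ``semi-stable reduction''. Full faithfulness via $\iota$ and the convergent equivalence of de Jong is also what the paper does. However, there is a genuine gap at the pivot of the essential surjectivity argument, in the step you describe as ``feeding this into the Kato--Inoue equivalence in reverse''. That equivalence (Theorem \ref{KatoEquivalence:t}) takes as input an \emph{integral} log Dieudonn\'e module: a Zariski-locally free log $F$-crystal admitting a Verschiebung. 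What you have at that point is only a log $F$-isocrystal with slopes in $[0,1]$, i.e.\ an object of the isogeny category. You must produce, inside this isogeny class, a locally free lattice stable under $V=p\Phi^{-1}$. This is exactly Proposition \ref{DMIsoc:p} of the paper, and it is not formal: one first replaces $\calE$ by its double dual, uses regularity of the $p$-adic lift in Krull dimension $\leq 2$ to get projectivity of reflexive modules, and then proves a uniform bound $p^{m+n}E\subseteq\Phi_E^m(E)$ by embedding into $W(\Omega)$ for an algebraically closed field $\Omega$ and invoking Dieudonn\'e--Manin. The slope hypothesis $[0,1]$ and the curve hypothesis both enter here; without this step the equivalence cannot be applied and your argument does not go through.

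Two smaller points. First, the dictionary you flag as ``the main obstacle'' --- matching semi-stable reduction of the $p$-divisible group with the restriction of a log $p$-divisible group at the boundary --- is not something you should expect to prove via residues/monodromy by hand; the paper uses the result of Bertapelle--Wang--Zhao (Theorem \ref{BWZ:t}) identifying $\BT^{\mathrm{log}}(R,M_R)$ with $\BT^{\mathrm{st}}(\Omega)$ over a henselian DVR, and on the isocrystal side it cites Trihan for the fact that the Dieudonn\'e isocrystal of a semi-stable group is overconvergent. Second, your reduction cover should not be assumed finite \'etale and tamely ramified at the cusps: quasi-unipotence of the local monodromy in general requires wildly ramified (indeed fppf) covers, which is why the paper defines potential semi-stability via fppf covers and descends $\DD^\ag$ by fpqc descent rather than \'etale descent.
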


Theorem \ref{intro-DM:t} generalises earlier results in \cite{Tri08, Pal22}.  Combining Theorem \ref{intro-DM:t} with Proposition \ref{good-red-et-intro:p}, and using Tsuzuki's characterisation of unit-root overconvergent $F$-isocrystals, \cite{TsuFLM}, we obtain Theorem \ref{intro-a-UR01:t}. A technical ingredient in the proof of Theorem \ref{intro-DM:t} is that, in small dimension, the isogeny category of log Dieudonné modules is exactly the category of coherent log $F$-isocrystals with slopes in $[0,1]$ (Proposition \ref{DMIsoc:p}). 

\spa

For log $p$-divisible groups Theorem \ref{intro-a-UR01:t} acquires the following form.

\begin{theo}[Theorem \ref{et-sub-psemi:t}]\label{et-sub-psemi-intro:t}
	Let $(Y,D)$ be a smooth compactification of $X$, where $D$ is a normal crossing divisor, and let $\calG$ be a log $p$-divisible group over $(Y,D)$.  Then every étale $p$-divisible subgroup $\calH\subseteq\calG_X$ extends to a $p$-divisible group over $Y$.
\end{theo}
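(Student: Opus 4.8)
The plan is to reduce the global extension problem to a local one at each codimension-one point of $D$, where Proposition \ref{good-red-et-intro:p} applies. An étale $p$-divisible group over $X$ is the same datum as its Tate module $T_p\calH=\varprojlim_n\calH[p^n]$, a lisse $\Zp$-sheaf on $X$, and extending $\calH$ to an étale $p$-divisible group over $Y$ is the same as extending this sheaf---equivalently, each finite étale level $\calH[p^n]$---across $D$. Since $Y$ is regular and $D$ is a normal crossing divisor, Zariski--Nagata purity of the branch locus reduces this to the absence of local monodromy at the generic point $\eta_i$ of each component of $D$: if $\calH$ is unramified at every $\eta_i$, then the covers $\calH[p^n]$ extend over all of $Y$, these extensions being unique and hence automatically compatible. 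Writing $K=k(Y)$ and $R_i=\calO_{Y,\eta_i}$ (a discrete valuation ring with fraction field $K$), I am reduced to proving that $\calH_K\subseteq\calG_K$ has good reduction over each $R_i$.

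To produce the honest $p$-divisible group over $R_i$ needed to invoke Proposition \ref{good-red-et-intro:p}, I would pass to log Dieudonné theory over the log trait $(\Spec R_i,\eta_i)$, on which $\calG$ has log good reduction. The associated log $F$-isocrystal has $p$-adic slopes in $[0,1]$, and its unit-root part $\mathcal{U}$---the minimal, slope-$0$ step of the slope filtration---is a canonical subobject. The nilpotent monodromy operator coming from the log structure lowers slopes, hence vanishes on $\mathcal{U}$; thus $\mathcal{U}$ has trivial monodromy, i.e.\ genuine (log-trivial) good reduction, and corresponds under log Dieudonné theory to an honest étale $p$-divisible group over $R_i$, again denoted $\mathcal{U}$. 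Since $\calH_K$ is étale it is isoclinic of slope $0$, so the slope filtration of $\calG_K$ places it inside the unit-root subgroup, giving $\calH_K\subseteq\mathcal{U}_K$.

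It then suffices to apply Proposition \ref{good-red-et-intro:p} to the $p$-divisible group $\mathcal{U}$ over $R_i$ and its generic subgroup $\calH_K\subseteq\mathcal{U}_K$: this yields that $\calH_K$ has good reduction over $R_i$, which is precisely the vanishing of local monodromy required in the first paragraph. Running this over all $i$ and applying purity once more extends $\calH$ to an étale $p$-divisible group over $Y$. The main obstacle is the middle step: extracting from the semistable log structure of $\calG$ an honest $p$-divisible group over $R_i$ that \emph{actually contains} $\calH_K$. The subtlety is that an étale subgroup need not lie in the ``good $+$ multiplicative'' part of a semistable model---it can project nontrivially onto the étale quotient---so one must isolate the unit-root part through the monodromy operator and use that $\calG$ is semistable over the given $(Y,D)$, rather than only after a ramified cover, along which good reduction would fail to descend.
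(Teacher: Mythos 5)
Your global skeleton coincides with the paper's: Zariski--Nagata purity reduces the statement to showing that $\calH_K$ has good reduction over (the henselization of) each $R_i=\calO_{Y,\eta_i}$, and this local statement is exactly Corollary \ref{local-et-sub:c}. The divergence --- and the gap --- lies in how you prove the local statement.

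The paper's local argument does not look for an étale overgroup of $\calH_K$ with good reduction. It uses the filtration $\calG_K^\mu\subseteq\calG_K^f\subseteq\calG_K$ built into the definition of semi-stability: since multiplicative $p$-divisible groups are connected in characteristic $p$, the subgroup $\calH_K$ meets $\calG_K^\mu$ trivially and hence injects into $\calG_K/\calG_K^\mu$, which by definition has an honest (in general \emph{non-étale}) model $\calG_2$ over $R_i$; Tate's Proposition \ref{good-red-et:p} is then applied to $\calH_K\subseteq(\calG_2)_K$. Your route instead tries to manufacture an étale model $\mathcal{U}$ over $R_i$ from the unit-root part of the log $F$-isocrystal, and this is where the argument breaks. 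A first, repairable, point: $\mathbb{D}$ is contravariant, so the unit-root sub-$F$-isocrystal corresponds to the étale \emph{quotient} $\calG_K^{\et}$, not to a subgroup; the inclusion you need is $\calH_K\hookrightarrow\calG_K^{\et}$, which does hold because $\calH_K\cap\calG_K^\circ=0$. The serious problem is that the unit-root part, although a subobject over the generic point, need not extend to a sub-$F$-crystal over $R_i$ at all: the obstruction is not the log monodromy operator $N$ (which indeed kills any slope-$0$ subobject) but the possible jump of the Newton polygon at the closed point, which has nothing to do with $N$. Concretely, let $\calG$ over $R_i$ be the $p$-divisible group of a generically ordinary elliptic curve with supersingular reduction, times a constant copy of $\Qp/\Zp$ playing the role of $\calH$. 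This $\calG$ is an honest $p$-divisible group over $R_i$, hence semi-stable with $N=0$, and it has a nontrivial étale subgroup generically; yet its generic étale quotient $\calG_K^{\et}$ is a \emph{ramified} Galois representation (this is the total ramification of the Igusa tower at supersingular points), so no étale $p$-divisible group $\mathcal{U}$ over $R_i$ with $\mathcal{U}_K=\calG_K^{\et}$ exists, and there is no unit-root sub-$F$-crystal over $R_i$. Your closing paragraph correctly flags this step as the main obstacle, but the proposed resolution via the monodromy operator cannot close it. The content of Tate's Proposition \ref{good-red-et:p} --- the key external input of the paper --- is precisely that one extends the étale \emph{subgroup} directly, by taking scheme-theoretic closures inside an arbitrary (non-étale) model and correcting by a bounded isogeny, without ever producing an étale overgroup with good reduction.
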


A weaker form of Theorem \ref{intro-a-UR01:t} was initially announced in \cite{DA19}. At the time, a full proof was not possible because we could not prove Theorem \ref{intro-DM:t}. Nevertheless, the extension property of étale $p$-divisible subgroups provided fundamental motivation, supporting the hope that \textit{Crew's parabolicity conjecture} could be solved \cite{DA23}. This conjecture roughly states that the monodromy group of a $\dag$-extendable $F$-isocrystal is ``maximal'' inside its overconvergent monodromy group. It plays an important role in the study of special fibres of Shimura varieties, where it implies that the universal $p$-divisible groups have ``big monodromy'' (see \cite{Igusa,DvH22, J23, J25, LS25}). Theorem \ref{intro-a-UR01:t} is a special case of the following result, which is the main $p$-adic input in the proof of Crew's conjecture.

\begin{theo}[{\cite[Prop. 3.1]{Tsu19}, \cite[Thm. 4.1.3]{DA23}}]\label{Parabolicity:t}
	Let $\calE$ be a $\dag$-extendable $F$-isocrystal over $X$ with constant Newton polygon and let $\calF$ be a sub-$F$-isocrystal of $\calE$. If $\tilde{\calF}\subseteq \calE$ is the smallest $\dag$-extendable sub-$F$-isocrystal containing $\calF$, then the minimal slope steps of $\calF$ and $\tilde{\calF}$ coincide\footnote{In \cite{DA23}, the $F$-isocrystal $\tilde{\calF}$ is called  the \textit{$\dagger$-hull} of $\calF\subseteq \calE$.}.
\end{theo}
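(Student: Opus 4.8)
The plan is to reduce the assertion to a statement about the unit-root (slope-$0$) part of a $\dag$-extendable $F$-isocrystal and then to feed in the extension property for étale $p$-divisible subgroups. Since $\calE$ has constant Newton polygon, so do $\calF$, $\tilde{\calF}$ and all the subquotients below them, and each carries its canonical slope filtration; I write $S_1(\calG)$ for the minimal slope step of $\calG$, i.e. its maximal subobject isoclinic of the smallest slope $\mu(\calG)$. Because slopes add in short exact sequences and there are no nonzero morphisms between isoclinic objects of distinct slopes, I first record the formal equivalence: $S_1(\calF)=S_1(\tilde{\calF})$ holds if and only if the quotient $\tilde{\calF}/\calF$ has minimal slope strictly larger than $\mu(\calF)$. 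Indeed, if $\mu(\tilde{\calF}/\calF)>\mu(\calF)$ then the composite $S_1(\tilde{\calF})\hookrightarrow\tilde{\calF}\to\tilde{\calF}/\calF$ vanishes, so $S_1(\tilde{\calF})\subseteq\calF$ and hence $S_1(\tilde{\calF})=S_1(\calF)$; the converse follows by counting the multiplicity of the minimal slope. Twisting by a suitable rank-one object, I may assume $\mu(\calF)=0$, so the task becomes: \emph{passing to the $\dag$-hull does not enlarge the unit-root part}.

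The key input is that the minimal slope step of a $\dag$-extendable $F$-isocrystal whose slopes lie in a length-one interval is again $\dag$-extendable. This is a reformulation of Theorem~\ref{intro-a-UR01:t}: its dual statement, transported through the auto-equivalence reversing slopes, says precisely that the minimal slope \emph{sub}-object of such an $F$-isocrystal is $\dag$-extendable. Granting this, I would argue by the minimality of the $\dag$-hull. If $S_1(\tilde{\calF})$ were strictly larger than $S_1(\calF)$, I would peel it off: the quotient $\tilde{\calF}/S_1(\tilde{\calF})$ is $\dag$-extendable, the image $\overline{\calF}$ of $\calF$ in it has strictly higher minimal slope, and the preimage of the $\dag$-hull of $\overline{\calF}$ is a $\dag$-extendable subobject containing $\calF$. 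An induction on the number of slopes, using at each stage the $\dag$-extendability of the bottom step, then forces $S_1(\tilde{\calF})\subseteq\calF$, contradicting the assumption and yielding $S_1(\calF)=S_1(\tilde{\calF})$.

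To make the bottom-step input available I would place myself in the geometric setting of the present paper. When the slopes of $\calE$ already lie in a length-one interval $[s,s+1]$, Theorem~\ref{intro-a-UR01:t} applies directly: over a smooth curve, Theorem~\ref{intro-DM:t} realizes the slope-$s$ part through an étale $p$-divisible subgroup, and Proposition~\ref{good-red-et-intro:p} (equivalently Theorem~\ref{et-sub-psemi-intro:t}) shows that this subgroup extends, so the minimal slope step has good reduction and is $\dag$-extendable. Tsuzuki's description of unit-root overconvergent $F$-isocrystals \cite{TsuFLM} is what identifies the slope-$s$ part with the étale object to which Tate's theorem applies.

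The main obstacle is twofold. First, the slope filtration is not overconvergent in general, so one cannot simply restrict the overconvergent structure to the minimal slope step; asserting that this step is $\dag$-extendable is essentially the statement to be proved, and breaking this circularity is exactly what the extension of étale $p$-divisible subgroups accomplishes in the unit-root case. Second, the $p$-divisible-group method requires all slopes to lie in a single length-one interval, which is why it only yields the special class of Theorem~\ref{intro-a-UR01:t}; for an arbitrary constant Newton polygon the minimal slope step cannot be isolated within such a window, and the reduction must instead proceed slopewise, as in \cite{Tsu19, DA23}. Throughout, one also checks that duality and the slope-reversal auto-equivalence preserve $\dag$-extendability, so that the passage between the minimal-slope subobjects considered here and the minimal-slope quotients of Theorem~\ref{intro-a-UR01:t} is legitimate.
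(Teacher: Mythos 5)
Your proposal runs in the wrong logical direction and rests on a false intermediate claim. As a preliminary point: the paper does not prove Theorem \ref{Parabolicity:t}; it quotes it from \cite{Tsu19} and \cite{DA23}, and the implication recorded in the introduction goes from Theorem \ref{Parabolicity:t} to Theorem \ref{intro-a-UR01:t} (the length-one-window special case), not the reverse. The paper even remarks that Theorem \ref{intro-a-UR01:t} and \cite[Thm. 1.1.1]{MaximalTori} are each implied by Theorem \ref{Parabolicity:t} but do not imply each other, which already signals that the special case cannot be bootstrapped into the general statement. Your opening formal reduction (that $S_1(\calF)=S_1(\tilde{\calF})$ is equivalent to $\tilde{\calF}/\calF$ having minimal slope strictly greater than $\mu(\calF)$) is correct, but everything after it breaks.

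The fatal gap is your ``key input'': the assertion that the minimal slope step $S_1$ --- a \emph{sub}-object, the bottom piece of the slope filtration --- of a $\dag$-extendable $F$-isocrystal with slopes in a length-one interval is again $\dag$-extendable, presented as the dual of Theorem \ref{intro-a-UR01:t}. It is not the dual, and it is false. Duality sends slope $\lambda$ to $-\lambda$, so it converts minimal-slope \emph{quotients} into maximal-slope \emph{sub}-objects; there is no slope-reversing auto-equivalence that produces the minimal-slope sub-object from Theorem \ref{intro-a-UR01:t}. The standard counterexample is the unit-root subcrystal of the $F$-isocrystal attached to a non-isotrivial family of ordinary elliptic curves (or abelian varieties): it is the minimal slope step of a $\dag$-extendable object with slopes in $[0,1]$, yet it has infinite monodromy and is not overconvergent --- this is exactly why the theorem is stated for quotients, and why the examples in \cite{Kos17} and \cite{Hel22} are nontrivial. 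Consequently your peeling-off induction collapses at the first step: $\tilde{\calF}/S_1(\tilde{\calF})$ need not be $\dag$-extendable, nor need the preimage of a $\dag$-hull taken upstairs. Note also that the conclusion of Theorem \ref{Parabolicity:t} is an \emph{equality} of two sub-objects, neither of which is claimed to be overconvergent, so ``overconvergence of $S_1$'' is not even the right target. The actual proofs in \cite{Tsu19} and \cite{DA23} proceed by entirely different (cohomological, respectively Tannakian/group-theoretic) arguments; your own closing caveat, that the $p$-divisible-group method needs all slopes in a single length-one window, is in effect an admission that the general case is not reached by this route.
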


 After taking duals, Theorem \ref{intro-a-UR01:t} corresponds to the special case where $(\calE,\Phi_\calE)$ has slopes in $[-s-1,-s]$ and $(\calF,\Phi_\calF)$ has slope $-s$. Indeed, under these conditions, Theorem \ref{Parabolicity:t} forces $\calF=\tilde{\calF}$, which is precisely Theorem \ref{intro-a-UR01:t}. Besides, it is worth noting that while both \cite[Thm. 1.1.1]{MaximalTori} and Theorem \ref{intro-a-UR01:t} are implied by Theorem \ref{Parabolicity:t}, they do not imply each other. Theorem \ref{intro-a-UR01:t} is enough to deduce the following consequence of Theorem \ref{Parabolicity:t} that we proved in \cite{DA23}.
 
  \begin{theo}[{\cite[Thm. 5.2.2]{DA23}}]\label{AbelianIntro:t}
 	Let $\Omega$ be a finitely generated field extension of $\Fp$ and let $A$ be an abelian variety over $\Omega$. The group $A(\Omega^{\mathrm{sep}})[p^\infty]$ is finite in the following two cases. 
 	\begin{itemize}
 		\item[(i)] If $\End(A)\otimes_\Z \Qp$ is a division algebra.
 		\item[(ii)] If $\End(A)\otimes_\Z \Q$ has no factor of Albert-type IV.
 	\end{itemize}
 \end{theo}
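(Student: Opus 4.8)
The plan is to spread $A$ out to an abelian scheme and reinterpret its $p$-power torsion through the overconvergent $F$-isocrystal attached to its $p$-divisible group, so that Theorem~\ref{intro-a-UR01:t} becomes applicable.

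First I would fix a finite field $k$ and a smooth affine $X/k$ with $k(X)=\Omega$ over which $A$ extends to an abelian scheme $\calA\to X$, and set $M:=\DD(\calA[p^\infty])$, the contravariant Dieudonné $F$-isocrystal of its $p$-divisible group. Since $\calA$ is an abelian scheme, $M$ is $\dag$-extendable, has $p$-adic slopes in $[0,1]$, and is pure of weight $1$. The first key step is the elementary equivalence: $A(\Omega^{\sep})[p^\infty]$ is infinite if and only if $\calA[p^\infty]$ admits a nonzero étale sub-$p$-divisible group $N$ over $\Omega$. Indeed, the divisible part of $A(\Omega^{\sep})[p^\infty]$ is a Galois-stable copy of some $(\Qp/\Zp)^c$, which descends to such an $N$; conversely any nonzero étale subgroup already contributes infinitely many points over $\Omega^{\sep}$. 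So I assume $N\neq 0$ and aim for a contradiction under (i) or (ii).

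Shrinking $X$ so that $N$ extends to an étale $p$-divisible subgroup of $\calA[p^\infty]$ over $X$, the inclusion $N\hookrightarrow\calA[p^\infty]$ yields, by contravariant Dieudonné theory, a quotient $Q:=\DD(N)$ of $M$ in $\Fisoc(X)$ that is isoclinic of slope $0$. This is exactly where Theorem~\ref{intro-a-UR01:t} enters, with $s=0$: it shows that $Q$ is again $\dag$-extendable. Being pure, $M$ is semisimple as an overconvergent $F$-isocrystal, so the surjection $M\twoheadrightarrow Q$ splits overconvergently, and the corresponding projector is an idempotent $e\in\End_{\Foi(X)}(M)$. By Tate's full faithfulness for $p$-divisible groups \cite{Tat67}, $\End_{\Foi(X)}(M)\cong\End(A)\otimes_\Z\Qp$, so $e$ is a nonzero idempotent which is not the identity, since $M$ also carries slope $1$ by the symmetry of the Newton polygon. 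I want to emphasise that this is precisely the place where overconvergence is indispensable: without Theorem~\ref{intro-a-UR01:t} the slope-$0$ part is only a convergent subquotient and $e$ need not lie in $\End(A)\otimes\Qp$.

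It then remains to contradict (i) and (ii). In case (i) the algebra $\End(A)\otimes\Qp$ is a division algebra and so contains no idempotent other than $0$ and $1$; done. In case (ii) I would reduce to $A$ simple, an étale subgroup projecting to a nonzero étale subgroup of some simple isogeny factor. For simple $A$ the idempotent $e$ cuts out the slope-$0$ isotypic summand of $M$, whose image under the polarization duality is the slope-$1$ summand — a simple object not isomorphic to it. Hence the Rosati involution interchanges two distinct simple factors of $\End(A)\otimes\Qp$, so it acts nontrivially on the centre, which by Albert's classification forces a factor of type IV, contrary to the hypothesis. The main obstacle is exactly this endgame for (ii): turning ``overconvergent slope-$0$ summand'' into a type IV factor rests on the identification $\End_{\Foi(X)}(M)\cong\End(A)\otimes\Qp$, on the semisimplicity of pure $F$-isocrystals, and on the standard link between the Rosati involution on the centre and the CM condition. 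The genuinely hard input, however, is the overconvergence supplied by Theorem~\ref{intro-a-UR01:t} (equivalently, by the extension property of Theorem~\ref{et-sub-psemi-intro:t}); everything after it is structural bookkeeping.
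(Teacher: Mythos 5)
Your overall route is the intended one: the paper does not reprove this statement (it only records that Theorem \ref{intro-a-UR01:t} suffices and refers to \cite[Thm.~5.2.2]{DA23}), and your reconstruction --- nonzero divisible part of $A(\Omega^{\sep})[p^\infty]$ $\leftrightarrow$ nonzero \'etale sub-$p$-divisible group $N\subseteq A[p^\infty]$ $\leftrightarrow$ slope-$0$ quotient of $M=\mathbb{D}(\calA[p^\infty])$, overconvergence of that quotient via Theorem \ref{intro-a-UR01:t}, a splitting idempotent landing in $\End(A)\otimes_\Z\Qp$, and the Rosati/Albert endgame in which the slope-$0$ and slope-$1$ isotypic projectors are swapped --- is exactly the deduction carried out in \cite[\S 5.2]{DA23}. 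Two of your justifications, however, do not hold as stated. First, ``being pure, $M$ is semisimple as an overconvergent $F$-isocrystal'' is false as a general principle: purity does not imply semisimplicity (already over a point, the unipotent $F$-isocrystal $\bigl(\Qp^2,\left(\begin{smallmatrix}1&1\\0&1\end{smallmatrix}\right)\bigr)$ is pure of weight $0$ and not semisimple, and on a curve $\Ext^1$ between weight-$0$ objects is generally nonzero). The semisimplicity of $\mathbb{D}(\calA[p^\infty])^\ag$ in $\Foi(X)$ is a genuine theorem --- it is proved in \cite{Pal22} and in \cite[\S 5.1]{DA23}, using the Zarhin--Mori semisimplicity of the $\ell$-adic $H^1$ together with companions (or a direct crystalline argument); note that $M$ is emphatically \emph{not} semisimple in $\Fisoc(X)$, which is why the splitting must take place in the overconvergent category and why this input cannot be waved away. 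Without this splitting the mere existence of a proper nonzero overconvergent subobject does not contradict (i), so the semisimplicity is load-bearing, not bookkeeping.

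Second, the identification $\End_{\Foi(X)}(M)\cong\End(A)\otimes_\Z\Qp$ is not ``Tate's full faithfulness'' from \cite{Tat67} (which concerns $p$-divisible groups over complete discrete valuation rings and finite fields). What you need is Kedlaya's full faithfulness of $\iota$ \cite{KedFull} to pass from $\Foi$ to $\Fisoc$, de Jong's equivalence (Theorem \ref{t:deJong}) and his extension theorem for homomorphisms over normal bases to pass to $\End(A[p^\infty])\otimes\Qp$ over $\Omega$, and finally de Jong's crystalline Tate conjecture \cite{deJ98}, $\End(A)\otimes\Zp\iso\End(A[p^\infty])$, for finitely generated fields of characteristic $p$. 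This is a deep input, not a formal one. With these two points repaired (i.e.\ citing \cite{Pal22} or \cite[\S 5.1]{DA23} for semisimplicity and \cite{deJ98} for the endomorphism identification), the rest of your argument --- including the reduction to $A$ simple in case (ii), the centrality of the slope-$0$ isotypic projector, and the fact that its Rosati image is the slope-$1$ projector, forcing the involution to be of the second kind and hence of Albert type IV --- is correct and agrees with the original proof.
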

Thus, using Proposition \ref{good-red-et-intro:p}, we obtain a more direct proof of Theorem \ref{AbelianIntro:t}. In general, we hope that the proof of Theorem \ref{intro-a-UR01:t} provided in this article can help the reader to better understand the content of Theorem \ref{Parabolicity:t} and suggest further applications.

\subsection*{Overview of the paper}
In §\ref{sec:logflat} we review the notions of log flatness and homological log flatness, and we introduce the concept of Frobenius‑smooth log schemes (Definition \ref{d:FrobeniusLogSmooth}).  Section \ref{sec:logDieudonne} is devoted to log Dieudonné theory; we prove the equivalence for Frobenius‑smooth log schemes (Theorem \ref{KatoEquivalence:t}).  In §\ref{sec:overconvequiv} we establish the overconvergent equivalence between potentially semi‑stable $p$-divisible groups and overconvergent $F$-isocrystals with slopes in $[0,1]$ (Theorem \ref{DM:t}).  Finally, in §\ref{sec:main} we prove the local extension property for étale subgroups (Corollary \ref{local-et-sub:c}) and deduce our main result (Theorem \ref{a-UR01:t}).

\subsection*{Acknowledgements}
We are grateful to Damian Rössler for bringing Proposition \ref{good-red-et-intro:p} to our attention. We thank Emiliano Ambrosi for his collaboration in the writing of \cite{MaximalTori}, which motivated this work. We also thank Kentaro Inoue for answering our questions about his article. Finally, we thank Kentaro Inoue and Fabien Trihan for their helpful comments on a first draft of this paper; their suggestions improved the clarity and correctness of the article.

\spa
The author was supported by the Max Planck Institute for Mathematics, the Deutsche Forschungsgemeinschaft (project IDs: 390685689 and 461915680), the Marie Sklodowska-Curie Actions (project ID: 101068237), and the French National Research Agency (project ID: ANR‑25‑CE40‑7869‑01).

\section{Logarithmic flatness}\label{sec:logflat}
In this article, we use the notion of log schemes as introduced in \cite{Kat89}. This section reviews the basic notions of log flatness required for our study of \textit{log $p$-divisible groups}. We recall the definitions of \textit{log flatness} and \textit{homological log flatness} as introduced in \cite{Kat21} and \cite[\S2.5]{KY25}, with particular attention to morphisms of \textit{Kummer type}. In Lemma \ref{faithful-flatness-lemma} we prove a certain flatness properties for ring homomorphisms induced by Kummer type morphisms. Subsequently, we introduce and study \textit{Frobenius-smooth log schemes} (Definition \ref{d:FrobeniusLogSmooth}), a class of log schemes for which the absolute Frobenius is log quasi-syntomic and log flat of Kummer type. These schemes will serve as the natural setting for \textit{log Dieudonné theory}.

\begin{defi}\label{log-flat:d}
	A morphism $f: (Y, \calM_Y) \to (X, \calM_X)$ of fine log schemes is called \textit{log flat} if fppf locally on $X$ and on $Y$, there exists a chart $\calM_Y \leftarrow P \to Q \to \calM_X$ of $f$ satisfying the following two conditions.
	
	\begin{itemize}
\item[(i)] The induced map $P^{\gp}\to Q^{\gp}$ is injective\footnote{This condition is equivalent to the condition that $\Spec(\calO_Y[Q^\gp])\to \Spec(\calO_Y[P^\gp])$ is flat in the classical sense. Indeed, if $P^{\gp}\to Q^{\gp}$ is injective, then $\calO_Y[Q^\gp]$ is a free $\calO_Y[P^\gp]$-module, with basis given by a set of representatives of $Q^\gp/P^\gp$.}.
\item[(ii)] The induced morphism of schemes $Y \to X\times_{\Spec(\Z[P])}\Spec(\Z[Q])$ is flat in the classical sense.
	\end{itemize}

We also say that $f$ is \textit{homologically log flat} if the following two conditions are satisfied.
\begin{itemize}
\item[{(i')}]  $f\colon Y\to X$ is classically flat.
\item[{(ii')}]  Fppf locally on $X$ and $Y$, there exists a chart $\calM_Y \leftarrow P \to Q \to \calM_X$ such that $P\to Q$ is a flat morphism of monoids, as in \cite[Def. 4.8]{Bha12}. 
\end{itemize} Condition (ii') is satisfied, for example, when the map $\Z[P]\to \Z[Q]$ is flat [\textit{ibid.}, Prop. 4.9].

 \spa
 
 Finally, we say that $f$ is \textit{log quasi-syntomic} if $f$ is homologically log flat and the relative log cotangent complex $\mathbb{L}_{(Y,\calM_Y)/(X,\calM_X)}$ is concentrated in degrees $-1$ and $0$.
 
 \spa
 
	A morphism of fine saturated monoids $P \to Q$ is called of \textit{Kummer type} if it is injective and for every element $q \in Q$ there exists an integer $n > 0$ such that $nq \in P$. We extend the notion to maps of fine saturated log schemes.
\end{defi}

\begin{defi}
The \textit{Kummer log flat topology} or \textit{kfl topology} is the topology on fine saturated log schemes whose coverings are topologically surjective log flat maps of Kummer type which are locally of finite presentation\footnote{This topology generalises the fppf topology rather than the fpqc topology.} \cite[Def. 2.3]{Kat21}. We say that a locally free $\calO$-module of finite rank with respect to this topology is a \textit{kfl vector bundle} (see \S \ref{s:kflVectorBundles}). The main results on \textit{kfl descent} are proved in \cite{INT13} and \cite{Kat21}.
\end{defi}

\begin{exam}
The morphism $(\Z[x,y],x^\N \oplus y^\N)^a\to (\Z[x,y/x],x^\N\oplus  (y/x)^\N)^a$ is log flat but it is not of Kummer type and it is not homologically flat. If $M\subseteq \N^2$ is the monoid generated by $(2,0), (1,1),$ and $(0,2)$, then $(\Z[M],M)^a\to (\Z[\N^{2}],\N^{ 2})^a$ is log flat of Kummer type, but it is not homologically flat. 
\end{exam}

\begin{defi}\label{LocallyFree-log:d}
	A fine log structure $\calM_X$ of a scheme $X$ is \textit{locally free} if étale locally on $X$ there exists a chart of the form $\N^r\to \calM_X$ for some $r\geq 0$.
\end{defi}
\begin{lemm}\label{l:LocFree}	A fine log structure $\calM_X$ is {locally free} if and only if for every geometric point $\bar{x}$, the stalk $(\calM_{X}/\calO^*_X)_{\bar{x}}$ is free.
\end{lemm}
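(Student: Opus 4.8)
The plan is to verify both implications by passing to the stalk at a geometric point $\bar{x}$ and analysing charts through the \emph{face} they cut out. The key observation is that if $\alpha\colon P\to\calM_X$ is a chart by a fine monoid $P$, then the stalk of the characteristic monoid is $(\calM_X/\calO_X^*)_{\bar{x}}\cong P/F$, where $F=\{p\in P : \alpha(p)\text{ maps to a unit in }\calO_{X,\bar{x}}\}$. This $F$ is a face of $P$: a product lies in $\calO_{X,\bar{x}}^*$ if and only if both factors do, which holds because $\calO_{X,\bar{x}}$ is local. I would take this description (which comes from the pushout presentation of $P^a$ and the identification of $\alpha^{-1}(\calO_{X,\bar x}^*)$ with $0$ in the quotient) as the common tool for both directions.

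For the forward implication I would use Definition \ref{LocallyFree-log:d} to obtain, étale locally, a chart $\N^r\to\calM_X$. Every face of $\N^r$ is a coordinate submonoid $\N^S$ for some $S\subseteq\{1,\dots,r\}$: if $v\in F$ has positive $i$-th coordinate, then writing $v=e_i+(v-e_i)$ and using the face property forces $e_i\in F$, so $F$ is generated by the $e_i$ it contains. Hence $(\calM_X/\calO_X^*)_{\bar{x}}\cong\N^r/\N^S\cong\N^{r-|S|}$ is free, as required.

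For the converse, assume $(\calM_X/\calO_X^*)_{\bar{x}}\cong\N^r$ is free. I would lift the $r$ standard generators to sections $m_1,\dots,m_r\in\Gamma(U,\calM_X)$ over an étale neighbourhood $U$ of $\bar{x}$, which is possible since $\calM_{X,\bar{x}}\to(\calM_X/\calO_X^*)_{\bar{x}}$ is surjective. Because $\N^r$ is free there are no relations to check, so the $m_i$ assemble into a homomorphism $\beta\colon\N^r\to\Gamma(U,\calM_X)$, hence a morphism of log structures $h\colon(\N^r)^a\to\calM_X|_U$. On the stalk at $\bar{x}$, the map $h$ is the identity on $\calO_{X,\bar{x}}^*$, while on characteristic monoids it is the map $\N^r/F\to\N^r$ sending generators to generators; as these images generate freely, we get $F=\{0\}$ and the map is the identity. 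Thus $h_{\bar{x}}$ is an isomorphism of monoids.

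The main obstacle is the final step: upgrading the stalkwise isomorphism $h_{\bar{x}}$ to an isomorphism over an étale neighbourhood, so that $\N^r\to\calM_X$ is genuinely a chart near $\bar{x}$. Here I would invoke that $\calM_X$ is fine, so its characteristic sheaf is a constructible sheaf of fine monoids and an isomorphism of coherent log structures that holds on one stalk propagates to a neighbourhood. Equivalently, this is exactly the existence of \emph{neat charts} for fine log structures, namely a chart $P\to\calM_X$ with $P\iso(\calM_X/\calO_X^*)_{\bar{x}}$, specialised to the free monoid $P=\N^r$. Shrinking $U$ accordingly yields the desired chart $\N^r\to\calM_X$ and establishes local freeness.
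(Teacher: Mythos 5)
Your proof is correct. The forward implication is essentially identical to the paper's argument: the chart $\N^r\to\calM_X$ exhibits the stalk of the characteristic monoid as $\N^r/F$, where $F$ is the face of elements whose image is a unit, and since every face of $\N^r$ is a coordinate submonoid $\N^S$, the quotient is free. In fact the paper only writes out this direction, reducing the whole lemma to the claim that the log ring associated to a pre-log ring $\N^r\to R$ has free characteristic monoid, and leaves the converse implicit as the standard existence of neat charts for fine log structures. Your second half supplies exactly that standard argument --- lift the $r$ generators of the free stalk to sections over an \'etale neighbourhood, observe that the resulting morphism of log structures is an isomorphism on the stalk at $\bar{x}$ (your verification that $F=\{0\}$ there is correct), and spread out using the coherence of the characteristic sheaf --- and it is sound. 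So you match the paper where the paper gives details, and you have additionally filled in the omitted, standard, direction.
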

\begin{proof}
To prove the lemma it is enough to show that for a pre-log ring $\alpha\colon \N^r\to R$, the associated log ring $\alpha\colon M\to R$ has the property that $M/R^*$ is free. By definition, $M/R^*$ is $\N^r/\sim$, where the equivalence relation is such that $x\sim 0$ if and only if $\alpha(x)\in R^*$. If $x\sim 0$ and $x=\sum_{i=1}^n x_i$, then each $x_i\sim 0$ because $\alpha(x_i)$ is forced to be a unit. This implies that there exists a subset $I\subseteq \{1,\dots,r\}$ such that $(\N^r/\sim)= \N^r/\N^I$. In particular, $M/R^*$ is free. 
\end{proof}

\begin{lemm}\label{l:Thick}Let $(X,\calM_X)\to (T,\calM_T)$ be a strict thickening of fine log schemes. Then $(X,\calM_X)$ is locally free if and only if $ (T,\calM_T)$ is so.
\end{lemm}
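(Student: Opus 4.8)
The plan is to reduce the statement to the criterion of Lemma~\ref{l:LocFree}, according to which local freeness of a fine log structure is detected on the stalks of the characteristic monoid $\overline{\calM}:=\calM/\calO^*$ at geometric points. Thus it suffices to show that a strict thickening induces an isomorphism on characteristic stalks, compatibly with the canonical identification of geometric points on the two sides.

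First I would record two standard facts about a strict thickening $i\colon (X,\calM_X)\to (T,\calM_T)$. Since the underlying morphism $i\colon X\to T$ is a surjective closed immersion defined by a nilpotent ideal, it is a homeomorphism of underlying topological spaces; hence geometric points of $X$ and of $T$ are in canonical bijection, and for any sheaf $\calF$ on $T$ the stalk $(i^{-1}\calF)_{\bar{x}}$ is identified with $\calF_{i(\bar{x})}$. Second, strictness means precisely that the canonical map $i^*\calM_T\to \calM_X$ is an isomorphism, so that $\calM_X$ is the log structure associated to the pre-log structure $i^{-1}\calM_T\to \calO_X$.

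The key step is then to verify that passing to the characteristic monoid commutes with strict pullback, namely $\overline{\calM_X}\cong i^{-1}\overline{\calM_T}$. This holds because $i^*\calM_T$ is by definition the pushout $i^{-1}\calM_T\oplus_{i^{-1}\calO_T^*}\calO_X^*$, so that quotienting by $\calO_X^*$ yields $i^{-1}\calM_T/i^{-1}\calO_T^*=i^{-1}(\calM_T/\calO_T^*)$. Combining this with the first fact produces a canonical isomorphism of monoids $(\overline{\calM_X})_{\bar{x}}\cong (\overline{\calM_T})_{i(\bar{x})}$ for every geometric point $\bar{x}$ of $X$.

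Finally, I would conclude as follows. Since the characteristic stalks on $X$ and $T$ are isomorphic at corresponding geometric points, the stalk $(\overline{\calM_X})_{\bar{x}}$ is free for all $\bar{x}$ if and only if $(\overline{\calM_T})_{\bar{x}}$ is free for all $\bar{x}$; by Lemma~\ref{l:LocFree} this is exactly the assertion that $(X,\calM_X)$ is locally free if and only if $(T,\calM_T)$ is. The only point requiring care, and really the heart of the matter, is the compatibility of the characteristic monoid with strict pullback together with the homeomorphism of underlying spaces; everything else is formal once Lemma~\ref{l:LocFree} is granted. I do not expect any genuine obstacle, since no nilpotent-ideal subtlety survives after passing to $\calM/\calO^*$.
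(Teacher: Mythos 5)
Your overall route is the same as the paper's: reduce to the stalkwise criterion of Lemma~\ref{l:LocFree} by showing that a strict thickening identifies the characteristic monoids on the two sides (the paper does this affine-locally, writing $M_B/B^*=M_A/A^*$ for a strict surjection of log rings with nilpotent kernel). So the structure is right, but the justification of your key step is off in a way that matters.

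You assert that $i^*\calM_T$ is \emph{by definition} the pushout $i^{-1}\calM_T\oplus_{i^{-1}\calO_T^*}\calO_X^*$. It is not: the pullback log structure is the log structure associated to the pre-log structure $\alpha\colon i^{-1}\calM_T\to\calO_X$, i.e.\ the pushout over $\alpha^{-1}(\calO_X^*)$, which in general strictly contains $i^{-1}\calO_T^*$. Consequently, for a general strict morphism one only gets a surjection $i^{-1}(\calM_T/\calO_T^*)\twoheadrightarrow \calM_X/\calO_X^*$, not an isomorphism (e.g.\ the strict pullback of the divisorial log structure on a DVR to its generic point has trivial characteristic, while the characteristic of the source at that point pulls back to $\N$). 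The equality $\alpha^{-1}(\calO_X^*)=i^{-1}\calO_T^*$ that you need is exactly where the thickening hypothesis enters: a section of $\calO_T$ becoming a unit modulo a nilpotent ideal is already a unit, which is the paper's observation that $B^*=B\times_A A^*$. So your closing remark that ``no nilpotent-ideal subtlety survives after passing to $\calM/\calO^*$'' has it backwards --- the nilpotence is precisely what makes the characteristic monoid invariant under this strict pullback. Once that point is supplied, the rest of your argument (homeomorphism of underlying spaces, identification of geometric points, conclusion via Lemma~\ref{l:LocFree}) goes through and coincides with the paper's proof.
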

\begin{proof}
After going étale locally, we are reduced to consider a strict morphism of log rings $(B,M_B)\to (A,M_A)$ with $B\twoheadrightarrow A$ surjective with nilpotent kernel. Since $B^*=B\times_A A^*$ as sets, strictness implies that $M_A=M_B\sqcup_{B^*} A^*$ as monoids. In particular, we have that $M_B/B^*=M_A/A^*$. We conclude thanks to Lemma \ref{l:LocFree}.
\end{proof}
\begin{lemm}\label{faithful-flatness-lemma}
	Let $f\colon P \hookrightarrow Q$ be a morphism of Kummer type of fs monoids  and assume that $P$ is free. Then the induced ring homomorphism $\Z[P] \to \Z[Q]$ is faithfully flat of finite type. 
\end{lemm}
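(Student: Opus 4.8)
The plan is to establish the stronger assertion that $\Z[P]\to\Z[Q]$ is finite free of rank $[Q^{\gp}:P^{\gp}]$: a free module of positive rank is faithfully flat, and being finite it is a fortiori of finite type. I identify $P$ with its image, so that $P\subseteq Q\subseteq L:=Q^{\gp}$ and $P^{\gp}=\Z^r\subseteq L$. Because $Q$ is fine, $L$ is a finitely generated abelian group, and because $f$ is of Kummer type the quotient $G:=L/\Z^r$ is finitely generated and torsion, hence finite; its order will be the rank.

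First I would pass to a linear model. Tensoring the finite-index inclusion $\Z^r=P^{\gp}\hookrightarrow L$ with $\Q$ yields an isomorphism $\Q^r\iso L\otimes_\Z\Q$, so I view $L$ inside $\Q^r$ through the map $\phi\colon L\to L\otimes_\Z\Q\cong\Q^r$; its kernel is the torsion subgroup $T\subseteq L$, and it restricts to the standard inclusion on $P^{\gp}=\Z^r$. Writing $\sigma=\{x\in\Q^r: x_i\ge 0 \text{ for all }i\}$ for the cone spanned by $P=\N^r$, the geometric heart of the argument is the identity $Q=\phi^{-1}(\sigma)$. For $\subseteq$, each $q\in Q$ satisfies $nq\in P$ for some $n>0$, so $n\phi(q)\in\sigma$ and hence $\phi(q)\in\sigma$. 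For $\supseteq$, if $x\in L$ has $\phi(x)\in\sigma$ then $|G|\,x\in\Z^r$ and $\phi(|G|\,x)\in\sigma$, so $|G|\,x\in\Z^r\cap\sigma=\N^r=P\subseteq Q$, and saturation of $Q$ forces $x\in Q$. This is the step in which fineness, the Kummer condition, and saturation are all used.

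Next I would decompose along $G$. For $g\in G$ set $Q_g=\{q\in Q:\bar q=g\}$; since $P+Q_g\subseteq Q_g$, one gets a decomposition of $\Z[P]$-modules $\Z[Q]=\bigoplus_{g\in G}\Z[Q_g]$ with $\Z[Q_g]=\bigoplus_{q\in Q_g}\Z\,x^q$. It then suffices to prove that each $Q_g$ is a free $P$-set of rank one, i.e. $Q_g=q_g+P$ for some $q_g$. The key observation is that $\phi$ is injective on a single coset: if $x,x'\in Q_g$ and $\phi(x)=\phi(x')$, then $x-x'$ lies in $T\cap\Z^r=0$. Choosing a lift $\tilde g\in L$ of $g$ with $\phi(\tilde g)=(a_1,\dots,a_r)$, the identity $Q=\phi^{-1}(\sigma)$ shows
$$\phi(Q_g)=\bigl(\phi(\tilde g)+\Z^r\bigr)\cap\sigma=m_g+\N^r,$$
a principal $\N^r$-set whose minimal element $m_g$ has $i$-th coordinate the representative of $a_i$ in $[0,1)$. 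Transporting $m_g$ back through the $P$-equivariant bijection $\phi|_{Q_g}$ gives $q_g\in Q_g$ with $Q_g=q_g+P$, so $\Z[Q_g]=x^{q_g}\Z[P]$ is free of rank one.

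Combining these, $\Z[Q]=\bigoplus_{g\in G}x^{q_g}\Z[P]$ is free over $\Z[P]$ of rank $|G|=[Q^{\gp}:P^{\gp}]\ge 1$, hence faithfully flat, and finite, hence of finite type. I expect the principality of each coset $Q_g$ to be the main obstacle: it is precisely here that the freeness of $P$ is indispensable, since it turns $\sigma$ into the standard orthant and the relevant fundamental domains into honest products of half-lines, and it is also the point at which the torsion of $Q^{\gp}$, invisible to $\phi$ but distributed among the cosets, must be handled with care.
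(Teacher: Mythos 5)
Your proof is correct and follows essentially the same route as the paper's: both exhibit $\Z[Q]$ as a free $\Z[P]$-module of rank $[Q^{\gp}:P^{\gp}]$ by slicing $Q$ into cosets of $P^{\gp}=\Z^r$ and using the fundamental domain $[0,1)^r$ to show that each coset meets $Q$ in a single $P$-orbit. The only cosmetic difference is that the paper first passes to the torsion-free quotient $\bar{Q}=Q/Q_{\tors}$ and reinstates the torsion at the end via the normal form $\tilde f + x + t$, whereas you keep the torsion distributed among the cosets of $G=Q^{\gp}/P^{\gp}$ and handle it through the injectivity of $\phi$ on each coset.
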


\begin{proof}

 Although the statement follows from \cite[Prop. 4.1]{Kat89}, we provide a detailed argument for the reader's convenience. Let $Q_{\tors}$ denote the torsion subgroup of $Q$. The torsion free quotient $\bar{Q} \coloneqq Q/Q_{\tors}$ is an fs monoid sitting inside $P^{\gp}$. The induced map $P \to \bar{Q}$ is Kummer as well. Let $n$ be a positive integer such that $n\bar{Q} \subseteq P$. Choose an identification $\N^r=P$, which induces the inclusion $\bar{Q}\hookrightarrow \tfrac{1}{n}\N^r$. Since $\bar{Q}$ is saturated and $\N^r\to \bar{Q}$ is of Kummer type, we deduce that $\tfrac{1}{n}\N^r\cap \bar{Q}^\gp=\bar{Q}$.

\spa

We choose a set of representatives $F \subseteq \bar{Q}^\gp$ for the finite quotient $\bar{Q}^{\gp}/\Z^r$ contained in the fundamental domain $[0,1)^r \subseteq \frac{1}{n}\Z^r$. By construction, we have $(f+\Z^r)\cap \tfrac{1}{n}\N^r=f+\N^r$ for every $f\in F$. Thus every element of $\bar{Q}$ can be written uniquely as $f+ x$ with $f \in F$ and $x \in \N^r$. Lifting $F$ to a subset $\tilde{F} \subseteq Q$, we obtain that every element of $Q$ can be written uniquely as $\tilde{f} +x + t$ with $\tilde{f} \in \tilde{F}$, $x \in \N^r$, and $t \in Q_{\tors}$. This shows that $\Z[Q]$ is a free $\Z[P]$-module of rank $|Q^{\mathrm{gp}}/P^{\mathrm{gp}}|$, hence $\Z[P] \to \Z[Q]$ is faithfully flat of finite type.
\end{proof}

\begin{coro}\label{c:KummerClassicallyFlat}
	Let $f: (Y, \calM_Y) \to (X, \calM_X)$ be a log flat morphism of Kummer type of fine saturated log schemes. If $\calM_X$ is locally free, then $f$ is homologically log flat. 
\end{coro}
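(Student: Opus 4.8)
The plan is to reduce everything to the monoid-level faithful flatness of Lemma \ref{faithful-flatness-lemma}. Both defining conditions (i') and (ii') of homological log flatness are fppf-local on $X$ and $Y$, so I would begin by passing to an fppf (indeed étale) neighbourhood. Since $\calM_X$ is locally free, after such localisation I may assume $\calM_X$ carries a free chart $\N^r\to\calM_X$. The first real task is to promote this to a chart of the morphism $f$: I want an fppf-local chart of $f$ given by monoid maps $P\to Q$ with $P=\N^r$ charting $\calM_X$, with $Q$ charting $\calM_Y$, still satisfying conditions (i) and (ii) of Definition \ref{log-flat:d}, and with $P\to Q$ injective of Kummer type (which is automatic since $f$ is of Kummer type and $P,Q$ are fs).

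Granting such a chart, the remainder is formal. Applying Lemma \ref{faithful-flatness-lemma} to the Kummer morphism $P=\N^r\hookrightarrow Q$ shows that $\Z[P]\to\Z[Q]$ is faithfully flat of finite type. In particular $\Z[P]\to\Z[Q]$ is flat, so by the remark following Definition \ref{log-flat:d} the monoid map $P\to Q$ is flat, which verifies condition (ii'). For condition (i') I would observe that $\Spec\Z[Q]\to\Spec\Z[P]$ is flat, hence its base change $X\times_{\Spec\Z[P]}\Spec\Z[Q]\to X$ is flat; composing with the flat map $Y\to X\times_{\Spec\Z[P]}\Spec\Z[Q]$ furnished by condition (ii) of log flatness shows that $Y\to X$ is classically flat, which is precisely (i'). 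This yields homological log flatness.

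The main obstacle is the chart-theoretic step, since log flatness only guarantees the existence of \emph{some} chart $P\to Q$ satisfying (i) and (ii), and a priori the monoid $P$ charting $\calM_X$ that it produces need not be free. I would resolve this using local freeness, which by Lemma \ref{l:LocFree} means the stalks $(\calM_X/\calO_X^*)_{\bar x}$ are free, together with the standard existence of charts for Kummer log flat morphisms over such a base: fppf-locally one can refine to a chart whose source is the prescribed free chart $\N^r$ of the target. I would then need to confirm that conditions (i) and (ii) survive this refinement, which holds because the relevant flatness in (ii) is preserved under the faithfully flat base changes relating the two charts; everything past this point is bookkeeping.
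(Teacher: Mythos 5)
Your proposal is correct and follows essentially the same route as the paper: fppf-localise, produce a chart of $f$ whose source $P$ is free (the paper gets this from \cite[Prop.~1.3]{INT13}, which furnishes a chart with $P=(\calM_X/\calO_X^*)_{\bar x}$, free by Lemma~\ref{l:LocFree}), then apply Lemma~\ref{faithful-flatness-lemma} and \cite[Prop.~4.9]{Bha12} for (ii') and the flatness of the composite $Y\to X\times_{\Spec(\Z[P])}\Spec(\Z[Q])\to X$ for (i'). The chart-refinement step you flag as the main obstacle is exactly what the cited result of Illusie--Nakayama--Tsuji supplies, so nothing further is needed.
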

\begin{proof}By \cite[Prop. 1.3]{INT13}, we can work fppf locally on $X$ and $Y$ to assume the existence of charts $\calM_X\leftarrow P\to Q \to \calM_Y$, where $P\to Q$ is of Kummer type, $P=(\calM_X/\calO_X^*)_{\bar{x}}$ for a geometric point $\bar{x}$ of $X$, and $Y \to X\times_{\Spec(\Z[P])}\Spec(\Z[Q])$ is flat. From this description, Lemma \ref{l:LocFree} implies that $P$ is free. Consequently, Lemma \ref{faithful-flatness-lemma} ensures that the induced ring map $\Z[P]\to \Z[Q]$ is flat. Finally, we apply \cite[Prop. 4.9]{Bha12} to conclude that $f$ is homologically log flat.
\end{proof}

Lemma \ref{faithful-flatness-lemma} has the following enhancement for rank 1 that we add for completeness, and we will not use in the article. 
\begin{lemm}\label{flatness-lemma}
	Let $f\colon P \hookrightarrow Q$ be an injective morphism of monoids and assume that $P$ is free of rank $1$ and $Q$ is integral. Then the induced ring homomorphism $\Z[P] \to \Z[Q]$ is flat.
\end{lemm}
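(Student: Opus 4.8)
The plan is to reduce the statement to a concrete description of $\Z[Q]$ as a module over $\Z[P]$ and then exhibit that module as a direct sum of manifestly flat pieces. Writing $P=\N$ with generator $1$, the ring $\Z[P]$ is the polynomial ring $\Z[t]$, and the structure map sends $t$ to $[p]$, where $p\coloneqq f(1)\in Q$; thus multiplication by $t$ on $\Z[Q]$ is the translation $[q]\mapsto[q+p]$. The key preliminary observation is that $p$ has infinite order in $Q^{\gp}$: since $Q$ is integral it injects into $Q^{\gp}$, so $np=0$ in $Q^{\gp}$ for some $n>0$ would force $f(n)=np=0=f(0)$ in $Q$, contradicting the injectivity of $f$. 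Hence the subgroup generated by $p$ is $\Z p\cong\Z$ inside $Q^{\gp}$.

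First I would decompose $Q$ according to the cosets of $\Z p$. Let $\pi\colon Q^{\gp}\to Q^{\gp}/\Z p$ be the projection and, for each class $c$ in the image, set $Q_c\coloneqq Q\cap\pi^{-1}(c)$. Since translation by $p$ preserves each fibre of $\pi$, the set-theoretic decomposition $Q=\bigsqcup_c Q_c$ refines to a decomposition of $\Z[t]$-modules
$$\Z[Q]=\bigoplus_c \Z[Q_c],$$
each $\Z[Q_c]$ being a $\Z[t]$-submodule because $t$ acts by translation within a single coset. It therefore suffices to prove that each summand is flat over $\Z[t]$.

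Next I would analyse a single nonempty coset. Fixing a base point $q_0\in Q_c$, the infinite order of $p$ makes $n\mapsto q_0+np$ a bijection from a subset $S_c\subseteq\Z$ onto $Q_c$. Because $Q$ is a submonoid containing $p$, the set $S_c$ is upward closed, i.e.\ $n\in S_c$ implies $n+1\in S_c$. Consequently $S_c$ is either $\{n\geq n_0\}$ for some $n_0\in\Z$, in which case $\Z[Q_c]$ is the free rank-one module generated by $[q_0+n_0 p]$, hence $\cong\Z[t]$; or $S_c=\Z$, in which case $\Z[Q_c]\cong\Z[t,t^{-1}]$, the localization of $\Z[t]$ at $t$. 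Both are flat, and an arbitrary direct sum of flat modules is flat, so $\Z[Q]$ is flat over $\Z[t]$.

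The subtlety worth flagging is that one cannot simply invoke ``torsion-free $\Rightarrow$ flat'': the base ring $\Z[t]$ is not a Dedekind domain, and $\Z[Q]$ need not even be a domain when $Q^{\gp}$ has torsion. The coset decomposition is precisely what sidesteps both difficulties, reducing flatness over $\Z[t]$ to the two elementary cases of a free module and the localization $\Z[t]_t$. I expect the only point requiring care to be the book-keeping of the decomposition, namely verifying that it is $\Z[t]$-linear and that each $S_c$ is upward closed, and this is routine.
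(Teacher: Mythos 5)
Your proof is correct, but it takes a genuinely different route from the paper's. The paper first reduces to flatness over a field: by \stack{0H7N} it suffices to check that $\Q[t]\to\Q[Q]$ and $\F_p[t]\to\F_p[Q]$ are flat, and over a field $k$ the ring $k[t]$ is a principal ideal domain, so flatness of $k[Q]$ reduces to torsion-freeness, which is obtained by embedding $k[Q]$ into $k[Q^{\gp}]$, a free module over $k[t]_t$. You instead work directly over $\Z[t]$ and replace the torsion-freeness argument by an explicit $\Z[t]$-module decomposition of $\Z[Q]$ along the cosets of $\Z p$ in $Q^{\gp}$ (where $p=f(1)$), identifying each summand with either $\Z[t]$ or $\Z[t,t^{-1}]$. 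This is more elementary --- no fibrewise flatness criterion, no reduction to PIDs --- and it proves slightly more, namely that $\Z[Q]$ is a direct sum of a free $\Z[t]$-module and copies of the localization $\Z[t]_t$; the price is the coset book-keeping, which you carry out correctly (in particular the check that $p$ has infinite order in $Q^{\gp}$ and that each index set $S_c$ is upward closed, the two places where integrality of $Q$ and injectivity of $f$ enter). Your closing remark is also apt: the paper does invoke ``torsion-free $\Rightarrow$ flat'', but only after reducing to the PID $k[t]$, which is precisely the reduction your decomposition renders unnecessary.
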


\begin{proof}
	Since $P$ is free of rank $1$, we can write $\Z[P] = \Z[t]$. To show that $\Z[Q]$ is flat over $\Z[t]$, it suffices to show that for every prime number $p$, the morphism $\Z_{(p)}[t] \to \Z_{(p)}[Q]$ is flat. In turn, by \stack{0H7N}, it is enough to prove that the morphisms $\Q[t] \to \Q[Q]$ and $\F_p[t] \to \F_p[Q]$ are flat. We are reduced to prove that for a field $k$, the $k[t]$-module $k[Q]$ is torsion free. Since $Q$ is integral, we have that $k[Q]\to k[Q^\gp]$ is injective, which implies that $k[Q]$ has no $t$-torsion. After inverting $t$, we have  $$k[t]_t\subseteq k[Q]_t\subseteq k[Q^\gp]$$ and $k[Q^\gp]$ is free over $k[t]_t$, thus in particular torsion free. This yields the desired result.
\end{proof}

\begin{defi}\label{d:FrobeniusLogSmooth}Let $p$ be a prime number. 
A fine log scheme $(X,\calM_X)$ over $\Fp$ is \textit{Frobenius-smooth}\footnote{This extends the classical notion of \textit{Frobenius-smooth} schemes, as defined in \cite[Lem. 2.1.1]{DrinfeldCrystals}. By definition, Frobenius-smooth schemes are schemes over $\Fp$ which admit Zariski locally a finite absolute $p$-basis.} if étale locally on $X$ there exists a chart $P\to \calM_X$ and a morphism $X\to \mathbb{A}^n_\Fp$ for some $n\geq 0$, such that  $P$ is free and finitely generated and the induced morphism $X\to \mathbb{A}^n_{\Fp[P]}$ is relatively perfect\footnote{This is stronger than \cite[Def. 1.4]{Tsu00} and \cite[Def 1.71.1]{CV22}.}.
\end{defi}
\begin{lemm}
	If $k$ is a field of characteristic $p$ with a finite $p$-basis, $X$ is a smooth $k$-scheme, and $D$ is normal crossing divisor of $X$, then the induced log scheme is Frobenius-smooth.
\end{lemm}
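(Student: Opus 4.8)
The plan is to verify the conditions of Definition \ref{d:FrobeniusLogSmooth} étale locally, reducing the problem to the classical statement that a polynomial ring over a field with a finite $p$-basis carries a finite absolute $p$-basis. Work étale locally on $X$. By the local structure of a normal crossing divisor there is an étale morphism $h\colon X\to \mathbb{A}^m_k=\Spec k[x_1,\dots,x_m]$ for which $D$ is the pullback of the coordinate divisor $\{x_1\cdots x_r=0\}$. In this chart the divisorial log structure $\calM_X$ admits the chart $\N^r\to \calM_X$, $e_i\mapsto x_i$, so I take $P=\N^r$, which is free and finitely generated (consistent with Lemma \ref{l:LocFree}). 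Since $\Fp[\N^r]=\Fp[u_1,\dots,u_r]$, the target $\mathbb{A}^n_{\Fp[P]}$ is simply $\mathbb{A}^{n+r}_{\Fp}$, and it remains to produce a morphism $X\to\mathbb{A}^n_{\Fp}$ so that the combined map $X\to\mathbb{A}^{n+r}_{\Fp}$, whose first $r$ coordinates are $x_1,\dots,x_r$, is relatively perfect.

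The key observation is the log-free reformulation of relative perfectness: a morphism $X\to\mathbb{A}^N_{\Fp}$ classifying functions $f_1,\dots,f_N\in\calO_X$ is relatively perfect if and only if $\{f_1,\dots,f_N\}$ is an absolute $p$-basis of $X$, i.e.\ $\calO_X$ is free over $\calO_X^{p}$ on the monomials $f^a$ with $0\le a_i<p$. Indeed, unwinding the relative Frobenius $F_{X/\mathbb{A}^N}$ identifies its underlying ring map with $\calO_X[t_1,\dots,t_N]/(t_i^p-f_i)\to\calO_X$, $t_i\mapsto f_i$ and $c\mapsto c^p$, which is bijective exactly when the $f^a$ form an $\calO_X^p$-basis of $\calO_X$. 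Hence it suffices to exhibit an absolute $p$-basis of $X$ whose first $r$ members are $x_1,\dots,x_r$.

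To do so, fix a $p$-basis $a_1,\dots,a_d$ of $k$, finite by hypothesis. Then $\{x_1,\dots,x_m,a_1,\dots,a_d\}$ is an absolute $p$-basis of $\mathbb{A}^m_k$: one has $(k[x])^p=k^p[x_1^p,\dots,x_m^p]$, and $k[x]$ is free over $k^p[x_1^p,\dots,x_m^p]$ on the monomials $a^I x^J$ with $0\le i_\alpha,j_\beta<p$, obtained by combining the $p$-basis of $k$ with the evident $p$-basis of the polynomial variables. Because $h$ is étale it is relatively perfect, and since relative perfectness is stable under composition the pullback of an absolute $p$-basis along $h$ is again an absolute $p$-basis; therefore $\{x_1,\dots,x_m,a_1,\dots,a_d\}$, pulled back to $X$, is an absolute $p$-basis of $X$. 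Taking the $n=(m-r)+d$ functions $x_{r+1},\dots,x_m,a_1,\dots,a_d$ as the coordinates of $X\to\mathbb{A}^n_{\Fp}$, the induced map $X\to\mathbb{A}^{n+r}_{\Fp}=\mathbb{A}^n_{\Fp[P]}$ is relatively perfect, which is precisely the assertion of Definition \ref{d:FrobeniusLogSmooth}.

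The only genuinely substantive inputs are the two facts recalled above — the equivalence between relative perfectness over affine space and the absolute $p$-basis condition, and its stability under étale pullback — which together constitute the logarithm-free content of Frobenius-smoothness as in \cite{DrinfeldCrystals}. The finiteness of the $p$-basis of $k$ is exactly what keeps $P$ and the absolute $p$-basis finite, so that the target affine space has finite dimension; everything else is the standard étale-local normal form of a normal crossing divisor. I expect the verification of the $p$-basis reformulation of relative perfectness to be the only place demanding care, the remaining steps being formal.
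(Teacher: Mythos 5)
Your proof is correct and follows essentially the same route as the paper: reduce étale-locally to the standard chart for a normal crossing divisor and then exhibit a relatively perfect morphism to $\mathbb{A}^{n}_{\Fp[P]}$ coming from a finite $p$-basis of $k$ together with the coordinates. You merely spell out the two steps the paper leaves implicit, namely the identification of relative perfectness over affine space with the absolute $p$-basis condition and its stability under composition with the étale chart map.
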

\begin{proof}
	The statement is étale local. We are reduced to the case of $(\mathbb{A}^n_{k[P]},P)^a$ where $P$ is free monoid. In this case, if $k$ admits a $p$-basis of cardinality $m$, we get a relatively perfect morphism $\mathbb{A}^n_{k[P]}\to \mathbb{A}^{m+n}_{\Fp[P]}$,
	\end{proof}

\begin{prop}\label{p:FrobeniusLogSmooth}
Let $(X,\calM_X)$ be a Frobenius-smooth log scheme. Then the absolute Frobenius of $(X,\calM_X)$ is log quasi-syntomic and log flat of Kummer type. In addition, $X$ is Frobenius-smooth in the classical sense (i.e., the absolute Frobenius is syntomic). If $X$ is Noetherian, then it is also regular and excellent.
\end{prop}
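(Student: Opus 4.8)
The plan is to reduce the whole statement to an explicit log smooth model and to transfer every property along a strict, relatively perfect base change. All the assertions are étale-local, so by Definition~\ref{d:FrobeniusLogSmooth} I would fix a chart $P\to\calM_X$ with $P$ free and finitely generated, say $P=\N^r$, together with a relatively perfect morphism $g\colon X\to \mathbb{A}^n_{\Fp[P]}=:S$. Endowing $S$ with the log structure associated to $P\to\calO_S$ makes $g\colon (X,\calM_X)\to(S,\calM_S)$ strict, and $(S,\calM_S)$ is log smooth over $\Spec\Fp$, being the product of $\mathbb{A}^n_\Fp$ with the toric log scheme $(\Spec\Fp[P],P)$. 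Since the absolute log Frobenius is functorial, there is a commutative square whose horizontal arrows are $F_{(X,\calM_X)}$ and $F_{(S,\calM_S)}$ and whose vertical arrows are $g$; as $g$ is strict and, by relative perfectness, the underlying square of schemes is Cartesian, the square of log schemes is log Cartesian. Hence $F_{(X,\calM_X)}$ is the strict base change of $F_{(S,\calM_S)}$ along $g$, and likewise $F_X$ is the base change of $F_S$.

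Next I would analyze the model. On the chart, $F_{(S,\calM_S)}$ is given by multiplication by $p$ on $P$: this is injective on $P^\gp\cong\Z^r$ and satisfies $pq\in pP$ for every $q\in P$, so it is of Kummer type, and by Lemma~\ref{faithful-flatness-lemma} the induced map $\Z[P]\to\Z[P]$ is free of rank $p^r$. A direct computation of condition (ii) of Definition~\ref{log-flat:d} shows that the relevant classical map is finite free: it is an isomorphism on the $r$ toric coordinates and the $p$-power Frobenius on the remaining $n$ ones. Thus $F_{(S,\calM_S)}$ is log flat of Kummer type, and it is homologically log flat by Corollary~\ref{c:KummerClassicallyFlat} (or \cite[Prop. 4.9]{Bha12}), since $F_S$ is the finite free Frobenius of $\mathbb{A}^{r+n}_\Fp$ and the monoid map is flat. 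For the cotangent complex I would use the transitivity triangle associated with $(S,\calM_S)\xrightarrow{F_{(S,\calM_S)}}(S,\calM_S)\to\Spec\Fp$. As $(S,\calM_S)$ is log smooth over $\Fp$, the outer terms are $\Omega^{1,\mathrm{log}}_{(S,\calM_S)/\Fp}[0]$, locally free of rank $r+n$, and the connecting map $F_S^*\Omega^{1,\mathrm{log}}\to\Omega^{1,\mathrm{log}}$ vanishes in characteristic $p$, since $d\log(x^p)=p\,d\log x=0$ and $d(t^p)=0$. Therefore $\LL_{F_{(S,\calM_S)}}\cong\Omega^{1,\mathrm{log}}\oplus\Omega^{1,\mathrm{log}}[1]$ is concentrated in degrees $-1$ and $0$ with locally free cohomology, so $F_{(S,\calM_S)}$ is log quasi-syntomic.

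It then remains to descend along $g$. Log flatness of Kummer type and homological log flatness are preserved under strict base change, and because $F_{(S,\calM_S)}$ is flat the square is Tor-independent; as $\LL_{F_{(S,\calM_S)}}$ has locally free cohomology, $\LL_{F_{(X,\calM_X)}}\cong g^*\LL_{F_{(S,\calM_S)}}$ remains concentrated in degrees $-1$ and $0$. Hence $F_{(X,\calM_X)}$ is log quasi-syntomic and log flat of Kummer type. On underlying schemes, $F_X$ is the base change of the finite free, syntomic Frobenius $F_S$ of $\mathbb{A}^{r+n}_\Fp$, hence syntomic, so $X$ is Frobenius-smooth in the classical sense. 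Finally, if $X$ is Noetherian then $F_X$ is finite (base change of the finite morphism $F_S$) and flat, so $X$ is $F$-finite; by Kunz's theorem the flatness of the absolute Frobenius forces $X$ to be regular, and $F$-finiteness forces it to be excellent (cf. \cite{DrinfeldCrystals}).

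The hard part will be the cotangent complex statement. One must both identify $\LL_{F_{(S,\calM_S)}}$ explicitly, where the vanishing of the connecting map in characteristic $p$ is the crucial input, and verify that the degree bound $[-1,0]$ survives $Lg^*$; this is precisely why it matters that the model $(S,\calM_S)$ is log smooth over $\Fp$, so that the cohomology of $\LL_{F_{(S,\calM_S)}}$ is locally free and $Lg^*=g^*$ on it. The structural point to get right is that strictness of $g$ combined with relative perfectness makes the Frobenius square log Cartesian, which is what allows every property, together with the classical regularity and excellence, to be imported from the explicit model.
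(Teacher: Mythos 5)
Your proposal is correct and follows essentially the same route as the paper: reduce everything to the model $(\mathbb{A}^n_{\Fp[P]},P)^a$ via the cartesian (strict) Frobenius square coming from relative perfectness, verify log flatness of Kummer type, homological flatness and syntomicity there, control the log cotangent complex by the transitivity triangle using log smoothness of the model over $\Fp$, and invoke Kunz's theorems for regularity and excellence. Your treatment of the cotangent complex is in fact more careful than the paper's one-line argument, since you justify why the degree bound $[-1,0]$ survives the (derived) pullback along $g$ by noting that $\LL_{F_{(S,\calM_S)}}$ has locally free cohomology (equivalently, is perfect of Tor-amplitude $[-1,0]$).
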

\begin{proof}   Write $F \colon (X,\calM_X) \to (X,\calM_X)$ for the absolute Frobenius.
	By definition, étale locally on $X$ there exist
	a chart $P \to \calM_X$ with $P$ free and finitely generated and a morphism
	$X \to \mathbb{A}^n_{\F_p}$, such that the following diagram is cartesian with strict vertical arrows
$$\begin{tikzcd}
		(X,\calM_X) \arrow[r,"F"] \arrow[d] &  (X,\calM_X) \arrow[d] \\
		(\mathbb{A}^n_{\Fp[P]},P)^a \arrow[r,"F"]               & (\mathbb{A}^n_{\Fp[P]},P)^a .
	\end{tikzcd}$$
The lower horizontal arrow is log flat of Kummer type and homologically flat and it is syntomic on the underlying schemes. It remains to prove that the relative log cotangent complex is concentrated in $[-1,0]$. This follows from the transitivity triangle, since the log cotangent complex of $(\mathbb{A}^n_{\Fp[P]},P)^a$ over $\Fp$ is concentrated in degree $0$. Finally, last part is a consequence of Kunz's theorem \cite[Thm. 107 and Thm. 108]{Mat80}
\end{proof}
\section{Logarithmic Dieudonné theory}
\label{sec:logDieudonne}
We present here the logarithmic extension of crystalline Dieudonné theory in characteristic $p$, following the foundational work of Kato \cite{Kat23} and Inoue \cite{Ino25}. We begin by briefly recalling the classical theory of Berthelot--Breen--Messing, de Jong, and Lau (Theorem \ref{t:deJong}). The central result of this section is Theorem \ref{KatoEquivalence:t}, where we prove that the category of log $p$-divisible groups and log Dieudonné modules are equivalent on Noetherian Frobenius-smooth log schemes.

\spa

For a scheme $X$ over $\Fp$, Berthelot--Breen--Messing constructed in \cite[Déf. 3.3.6]{BBM82} a contravariant \textit{crystalline Dieudonné module functor} $$\mathbb{D}:\BT(X)^{\mathrm{op}}\to \DM(X)$$ generalising Dieudonné's original construction over perfect fields (see \cite[\S 3]{DA24} for more details).  The objects in $\DM(X)$ are triples $(\calE,\Phi_\calE,V_\calE)$ where $(\calE, \Phi_\calE)$ is a locally free crystal of finite rank and $V_\calE\colon \calE \to F^*\calE$ is a morphism such that $\Phi_\calE\circ V_\calE=p$.

\begin{theo}[{\cite{deJ95}}, \cite{Lau18}]\label{t:deJong}
Let $X$ be a Frobenius-smooth scheme over $\Fp$. The  Dieudonné module functor $$\mathbb{D}:\BT(X)^{\mathrm{op}}\to \DM(X)$$ is an exact equivalence of categories. 
\end{theo}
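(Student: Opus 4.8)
The plan is to establish full faithfulness and essential surjectivity separately and to read off exactness from the construction of $\mathbb{D}$. Since both $\BT(X)$ and $\DM(X)$ satisfy étale (indeed fppf) descent and $\mathbb{D}$ commutes with base change, I would first reduce to the case in which $X$ admits a finite absolute $p$-basis, i.e.\ there is a relatively perfect morphism $X\to\mathbb{A}^n_{\Fp}$ as in Definition \ref{d:FrobeniusLogSmooth}; a standard filtered-limit argument, using that $p$-divisible groups and Dieudonné modules of finite presentation are compatible with cofiltered limits of the base, further reduces to $X$ Noetherian. By Proposition \ref{p:FrobeniusLogSmooth} such an $X$ is then regular and excellent and its absolute Frobenius is syntomic, which are exactly the hypotheses under which the crystalline Dieudonné machinery is well behaved.

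For full faithfulness I would invoke de Jong's theorem \cite{deJ95}: over a regular excellent $\Fp$-scheme the functor $\mathbb{D}$ is fully faithful. The mechanism is that a homomorphism of Dieudonné crystals, being a horizontal map of the underlying modules, can be compared with a homomorphism of $p$-divisible groups after passing to the rigid-analytic generic fibre of a formal lift, where de Jong's rigidity results for morphisms apply; the Frobenius-smooth hypothesis furnishes the required formally smooth lifts étale-locally. Injectivity on $\Hom$-groups is then formal, and surjectivity reduces to constructing the homomorphism locally and gluing by injectivity.

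The main work, and the principal obstacle, is essential surjectivity, which is the content of Lau's theorem \cite{Lau18}. Given a Dieudonné module $(\calE,\Phi_\calE,V_\calE)$ I would work locally with respect to the $p$-basis: the $p$-basis provides, étale-locally, a frame, that is, a $p$-adically complete lift of $X$ equipped with a lift of Frobenius. Evaluating the crystal $\calE$ on this frame, together with $\Phi_\calE$ and $V_\calE$, produces a display (equivalently a Dieudonné window) over the frame, and the Zink--Lau classification identifies displays with $p$-divisible groups. This yields a $p$-divisible group locally whose Dieudonné module is the prescribed one, and full faithfulness supplies the descent data needed to glue these local objects into a global $p$-divisible group over $X$. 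The delicate points are the independence of the construction from the chosen frame and the compatibility of the display formalism with the crystalline functor, both of which are the technical heart of \cite{Lau18}.

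Finally, exactness is built into the Berthelot--Breen--Messing construction: a short exact sequence of $p$-divisible groups gives, after evaluation on any PD-thickening, a short exact sequence of locally free modules, so $\mathbb{D}$ carries admissible short exact sequences in $\BT(X)^{\op}$ to admissible short exact sequences in $\DM(X)$; conversely, being an equivalence, $\mathbb{D}$ reflects exactness, and the two notions of admissible sequence correspond. Combining the three steps gives the asserted exact equivalence of categories.
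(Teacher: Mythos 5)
The paper offers no proof of this theorem: it is imported as a black box, with the bracketed citations to de Jong and Lau standing in for the argument. Your proposal is therefore not a competing proof so much as a gloss on those same two references, and at that level it is reasonable; but the steps you add on top of the citations are the shakiest part. First, the clean division of labour you propose --- de Jong for full faithfulness, Lau for essential surjectivity --- does not match the literature: \cite{deJ95} proves the full equivalence (both directions) via the formal/rigid lifting technique you describe, in the smooth-over-a-perfect-field and $p$-basis settings, while \cite{Lau18} gives the treatment via displays, windows and divided Dieudonn\'e crystals that covers general Frobenius-smooth bases; and the ``full faithfulness over regular excellent $\F_p$-schemes'' you invoke conflates this with de Jong's later isogeny-theoretic results, which yield full faithfulness only up to isogeny over such bases. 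The operative hypothesis throughout is the finite absolute $p$-basis, not regularity or excellence, so routing the argument through Proposition \ref{p:FrobeniusLogSmooth} is a detour. Second, the ``standard filtered-limit argument'' reducing to the Noetherian case is neither needed (neither reference assumes Noetherianness) nor obviously available: a Frobenius-smooth scheme need not be Noetherian --- any perfect $\F_p$-algebra qualifies, with empty $p$-basis --- and presenting such a scheme as a cofiltered limit of Noetherian Frobenius-smooth schemes along relatively perfect transition maps is not a standard manoeuvre. None of this affects the truth of the statement, since the cited papers contain complete proofs, but as written your sketch inserts two reduction steps that the actual proofs do not make and that you have not justified.
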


\subsection{}\label{s:kflVectorBundles} We switch now to the logarithmic setting. An important phenomenon in the kfl topology is that vector bundles do not satisfy descent with respect to kfl covers. The key example is provided by the Frobenius morphism. Recall that for a smooth scheme $X$ over a perfect field $k$, the classical Cartier's Frobenius descent theorem provides an equivalence between the category of vector bundles on $X^{(p)}$ and the category of vector bundles on $X$ equipped with an integrable connection with trivial $p$-curvature. In the logarithmic setting, if the log structure $\calM_{X}$ comes from a normal crossing divisor of $X$, then kfl vector bundles on $(X^{(p)},\calM_{X^{(p)}})$ correspond to kfl vector bundles on $(X,\calM_X)$ equipped with a flat log connection with trivial $p$-curvature (the Frobenius is a kfl cover by Proposition \ref{p:FrobeniusLogSmooth}). Therefore, any classical vector bundle over $(X,\calM_X)$ with a log connection with nontrivial residues corresponds to a kfl vector bundle on $(X^{(p)},\calM_{X^{(p)}})$ which is not classical.

\spa

For this reason, when defining \textit{log $p$-divisible groups} one can either work with kfl vector bundles (in \cite{Ino25} these are the weak log $p$-divisible groups) or with classical vector bundles. An additional phenomenon is that Cartier duality does not preserve representability. We use the following definition.

\begin{defi}
Let $(X,\calM_X)$ be a fine saturated log scheme over $\Fp$. We denote by $\BT^{\mathrm{log}}(X,\calM_X)$ the category of \textit{dual representable log $p$-divisible groups} over $(X,\calM_X)$ (i.e., both $\calG[p^n]$ and $\calG^\vee[p^n]$ are representable), as defined in \cite[\S 4.1]{Kat23}. Following \cite[Def. 3.13]{Ino25}, we refer to them simply as \textit{log $p$-divisible groups}.
\end{defi}
Log $p$-divisible groups have the following explicit description étale locally on $X$.

\begin{theo}[{\cite[Thm. 3.1]{Kat23}}, {\cite[\S 3.1]{WZ24}}]\label{DescriptionPDivisible-t}
Let $(X,\calM_X)$ be an fs log scheme such that $X$ is the spectrum of a Noetherian strictly henselian local ring with characteristic $p$ residue field. If $P\coloneqq\Gamma(X,\calM_X/\calO_X^*)$, then there exists a natural equivalence between the category $\BT^{\mathrm{log}}(X,\calM_X)$ and the category of pairs $(\calG,N)$ where $\calG\in \BT(X)$ and $$N\colon \calG^{\et}(1)\to \calG^{\circ}\otimes_\Z P^\gp$$ is a morphism of $p$-divisible groups\footnote{	For an étale $p$-divisible group $\calG$, we denote by $\calG(1)$ the $p$-divisible group of multiplicative type defined by $$\calG(1)_n\coloneqq \calG[p^n]\otimes_{\Z/p^n}\mu_{p^n}.$$}.  
\end{theo}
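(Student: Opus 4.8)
The plan is to trivialise the log structure along the Kummer tower and to reinterpret a log $p$-divisible group as a classical one carrying the monodromy of the log inertia. Since $X=\Spec R$ is strictly henselian local, I would fix a chart $P\to\calM_X$ and form, for each $n$, the Kummer cover adjoining $p^n$-th roots of $P$; the resulting pro-cover $\tilde X\to X$ is a torsor under the pro-$p$ log inertia group $I\coloneqq\Hom(P^\gp,\Zp(1))$. Over $\tilde X$ the log structure is $p$-divisible (the $p$-part of the Kummer inertia has been exhausted) and a log $p$-divisible group becomes classical. By kfl descent (\cite{INT13}, \cite{Kat21}) the category $\BT^{\mathrm{log}}(X,\calM_X)$ is therefore equivalent to the category of classical $p$-divisible groups $\calG$ over $\tilde X$ equipped with a descent datum, i.e. a continuous action of the Kummer Galois group; this reduces the theorem to analysing that action.

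The next step is to show the action is encoded by a single operator. Over the strictly henselian base the connected–étale sequence $0\to\calG^\circ\to\calG\to\calG^\et\to 0$ has rigid graded pieces: $\calG^\et$ is a constant étale $p$-divisible group and $\calG^\circ$ is determined by its Dieudonné crystal, so both descend canonically to $X$ and the Galois action on them is trivial. Consequently the action preserves this two-step filtration and is trivial on the associated graded, so for $\gamma$ in the Galois group the automorphism $u(\gamma)$ has the form $\id+n(\gamma)$ with $n(\gamma)$ factoring as $\calG\twoheadrightarrow\calG^\et\to\calG^\circ\hookrightarrow\calG$. Any composite $n(\gamma_1)n(\gamma_2)$ then vanishes, whence $n(\gamma_1\gamma_2)=n(\gamma_1)+n(\gamma_2)$, so $n$ is a homomorphism into the $\Zp$-module $\Hom(\calG^\et,\calG^\circ)$; in particular the prime-to-$p$ part of the Galois group acts trivially and only $I$ intervenes. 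Thus $n\in\Hom(\calG^\et,\calG^\circ)\otimes I^\vee$, and unwinding $I^\vee\cong P^\gp\otimes\Zp(-1)$ transports $n$ to exactly a morphism of $p$-divisible groups $N\colon\calG^\et(1)\to\calG^\circ\otimes_\Z P^\gp$. This defines the functor $\calG^{\mathrm{log}}\mapsto(\calG,N)$.

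For the quasi-inverse I would send $(\calG,N)$ to the \emph{standard extension}: using $N\bmod p^n$ as a unipotent descent datum along the $n$-th Kummer cover, assemble $\calG[p^n]$ into a log finite locally free group scheme, and check that both it and its Cartier dual are representable, so that the system $(\calG[p^n])_n$ is an object of $\BT^{\mathrm{log}}(X,\calM_X)$. That the two functors are mutually quasi-inverse and natural then follows from the effectivity of kfl descent together with the bijection between unipotent actions and operators $N$ established above; naturality is clear because $\tilde X$, the connected–étale sequence, and $N$ are all compatible with strict base change.

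The main obstacle is the second step: proving that the log inertia acts unipotently and \emph{trivially on the associated graded} of the connected–étale filtration, so that the entire log structure is captured by the single morphism $N$. This rests on the rigidity of the constant étale part and of the connected part over the strictly henselian base, and on the effectivity of kfl descent for finite locally free group schemes. Ensuring that this descent is compatible with Cartier duality—so that the dual-representability condition defining $\BT^{\mathrm{log}}$ is matched under the correspondence—is the most delicate point, and is exactly where the passage from \emph{weak} to honest log $p$-divisible groups enters.
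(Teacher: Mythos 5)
First, note that the paper itself offers no proof of Theorem \ref{DescriptionPDivisible-t}: it is imported from \cite[Thm. 3.1]{Kat23} and \cite[\S 3.1]{WZ24}, so there is no internal argument to compare against. Your strategy --- trivialise along the $p$-power Kummer tower, identify $\BT^{\mathrm{log}}(X,\calM_X)$ with classical $p$-divisible groups carrying a descent datum, show the datum is unipotent for the connected--\'etale filtration, and read off a single operator $N$ --- is in spirit the same as the ``standard extension'' construction of Kato and W\"urthen--Zhao, recast in \v{C}ech-descent language; your proposed quasi-inverse is literally their standard extension attached to $(\calG,N)$.

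There are, however, two genuine gaps. First, in characteristic $p$ the ``Kummer Galois group'' $\Hom(P^{\gp},\Zp(1))$ is a pro-infinitesimal group scheme ($\mu_{p^n}$ is connected over the residue field), not a profinite group: there are no elements $\gamma$, and the whole second step must be phrased with scheme-valued points or Hopf-algebra coactions. This is repairable, and it is in fact the correct reason the action on $\calG^{\et}$ is trivial (a homomorphism from a connected group scheme to the \'etale sheaf $\underline{\Aut}(\calG^{\et})$ vanishes), but as written the argument does not apply. Second, and more seriously, writing $u(\gamma)=\id+n(\gamma)$ presupposes a \emph{reference} descent datum on the extension itself, i.e.\ that the classical $p$-divisible group over $\tilde X$ is, compatibly with the filtration, the pullback of a classical $\calG$ over $X$. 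Your justification --- ``$\calG^{\circ}$ is determined by its Dieudonn\'e crystal, so the action on it is trivial'' --- is not an argument: fppf descent makes the connected part descend to \emph{some} connected group over $X$, but producing the classical extension $\calG$ of $\calG^{\et}$ by $\calG^{\circ}$ over $X$, so that the remaining discrepancy is exactly the unipotent cocycle $n$, is precisely the content of Kato's splitting
$$\Ext^1_{\mathrm{kfl}}(\calG^{\et},\calG^{\circ})\cong \Ext^1_{\mathrm{fppf}}(\calG^{\et},\calG^{\circ})\oplus \Hom\bigl(\calG^{\et}(1),\calG^{\circ}\otimes_\Z P^{\gp}\bigr),$$
equivalently of the computation of $H^1_{\mathrm{kfl}}(X,\mu_{p^n})$ over the strictly henselian base. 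Without that input the cocycle $n$ is not well defined and essential surjectivity of $(\calG,N)\mapsto \calG^{\mathrm{log}}$ does not follow. You do correctly isolate the compatibility of kfl descent with Cartier duality (dual representability) as the delicate point for the quasi-inverse; that part of your plan matches \cite{WZ24}.
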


\begin{defi}
Write $(X,\calM_X)_{\mathrm{cris}}^\mathrm{log}$ for the small log crystalline site of $(X,\calM_X)$ over $(\Zp,p\Zp,\gamma^\mathrm{can})$ defined in \cite[\S 5]{Kat89}, where $\gamma^\mathrm{can}$ is the canonical PD structure on $p\Zp$. Let $\DM^\mathrm{log}(X,\calM_X)$ be the category of log $F$-crystals over $X$ which are locally free for the Zariski topology and can be endowed with a Verschiebung.
\end{defi}

Kato  constructed in \cite[\S 5]{Kat23} the log Dieudonné module functor
$$\mathbb{D}^{\mathrm{log}}\colon\BT^{\mathrm{log}}(X,\calM_X)^\mathrm{op}\to \DM^\mathrm{log}(X,\calM_X),$$

 under a technical assumption on the existence of PD structures for kfl covers [\textit{ibid.} \S 5.2.1]. If $(X,\calM_X)$ is locally free, we get rid of this extra assumption thanks to the following lemma. 
 
 \begin{lemm}\label{l:Assumption}
Let $(X,\calM_X)$ be a log scheme with a locally free log structure. For every log (strict) PD thickening $((U,\calM_U)\hookrightarrow (T,\calM_T),\gamma)\in (X,\calM_X)_{\mathrm{cris}}^\mathrm{log}$ and every log flat morphism $(T',\calM_{T'})\to (T,\calM_T)$ of Kummer type, there exists a unique PD structure on the ideal associated to $U\times_T T'\hookrightarrow T'$ which is compatible with $\gamma$.
 \end{lemm}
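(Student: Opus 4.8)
The plan is to reduce the assertion to a purely ring-theoretic statement about divided powers under a flat base change. Working locally we may take everything affine: set $A\coloneqq\calO_T$ and $A'\coloneqq\calO_{T'}$, and let $I\subseteq A$ be the PD ideal of the closed immersion $U\hookrightarrow T$ carrying the divided powers $\gamma$. Since $U\times_T T'$ is the base change of $U\hookrightarrow T$ along $T'\to T$, its defining ideal is $I'=IA'$. Both existence and uniqueness of a PD structure on $I'$ compatible with $\gamma$ are local questions on $T'$ and on $T$, and uniqueness lets local solutions glue, so this reduction is harmless.

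The crucial point is that the underlying morphism of schemes $T'\to T$ is classically flat, and this is where the local freeness hypothesis enters. First I would verify that $\calM_T$ is locally free. The structure morphism $U\to X$ in the small log crystalline site is strict, so $\calM_U/\calO_U^*$ has the same (free) stalks as $\calM_X/\calO_X^*$; by Lemma \ref{l:LocFree} this means $\calM_U$ is locally free. Applying Lemma \ref{l:Thick} to the strict thickening $(U,\calM_U)\hookrightarrow(T,\calM_T)$ then shows that $\calM_T$ is locally free as well. Now Corollary \ref{c:KummerClassicallyFlat} applies to the log flat morphism of Kummer type $(T',\calM_{T'})\to(T,\calM_T)$: since $\calM_T$ is locally free, the morphism is homologically log flat, and in particular $T'\to T$ is classically flat, i.e. $A\to A'$ is a flat ring homomorphism.

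Granting flatness, the lemma follows from the standard flat base-change property of divided powers (Berthelot--Ogus): for a flat ring homomorphism $A\to A'$ and a PD ideal $(I,\gamma)\subseteq A$, one has $I'=IA'\cong I\otimes_A A'$, and $\gamma$ extends to a unique PD structure $\gamma'$ on $I'$ making $A\to A'$ a morphism of PD rings. Existence is the base-change statement itself; uniqueness is forced because the operations $\gamma'_n$ are determined on the generators $I$ of $I'$ by $\gamma'_n|_I=\gamma_n$ together with the divided-power identities $\gamma'_n(ax)=a^n\gamma'_n(x)$ and $\gamma'_n(x+y)=\sum_i\gamma'_i(x)\gamma'_{n-i}(y)$. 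This produces exactly the compatible PD structure on the ideal of $U\times_T T'\hookrightarrow T'$ demanded by the statement, and shows it is unique.

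The main obstacle is establishing the classical flatness of $T'\to T$. A log flat morphism of Kummer type need not be classically flat in general (compare the examples recorded after Definition \ref{log-flat:d}), so the hypothesis that $\calM_X$, and therefore $\calM_T$, is locally free is doing essential work: it is precisely what upgrades log flatness to classical flatness, via Lemma \ref{l:Thick} and Corollary \ref{c:KummerClassicallyFlat}. Once flatness is secured, the divided-power extension is entirely formal.
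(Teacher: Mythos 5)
Your proof is correct and follows essentially the same route as the paper: Lemma \ref{l:Thick} gives local freeness of $\calM_T$, Corollary \ref{c:KummerClassicallyFlat} upgrades the Kummer log flat morphism to a classically flat one, and the conclusion is the standard flat base-change statement for divided powers (the paper cites \stack{07H1} for this last step). Your write-up merely makes explicit the intermediate observations (that $\calM_U$ is pulled back from $\calM_X$, and the uniqueness argument for the extended PD structure) that the paper leaves implicit.
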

 \begin{proof}
Thanks to Lemma \ref{l:Thick}, the log structure $\calM_T$ is locally free. We deduce from Corollary \ref{c:KummerClassicallyFlat} that $T'\to T$ is classically flat. The result then follows from \stack{07H1}.
 \end{proof}

We are now ready to state the fundamental theorem of logarithmic Dieudonné theory, which provides a crystalline classification of log $p$-divisible groups.




\begin{theo}[de Jong, Lau, Kato, Inoue]\label{KatoEquivalence:t} If $(X,\calM_X)$ is a Noetherian Frobenius-smooth log scheme, the log Dieudonné module functor
	 $$\mathbb{D}^{\mathrm{log}}\colon\BT^{\mathrm{log}}(X,\calM_X)^\mathrm{op}\to \DM^\mathrm{log}(X,\calM_X)$$ is an exact equivalence of categories.
\end{theo}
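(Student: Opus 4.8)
=== PROOF PROPOSAL ===

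\textbf{Strategy.} The plan is to prove the equivalence by reducing the logarithmic statement to the known non-logarithmic theorem of de Jong and Lau (Theorem \ref{t:deJong}) via the explicit étale-local description of log $p$-divisible groups (Theorem \ref{DescriptionPDivisible-t}) and a matching description of the target category $\DM^\mathrm{log}$. Since both $\mathbb{D}^\mathrm{log}$ and the comparison are étale-local in nature, and since the honest statement concerns fully faithfulness and essential surjectivity, I would work étale locally on $X$, where by Proposition \ref{p:FrobeniusLogSmooth} the scheme $X$ is regular, excellent, and Frobenius-smooth in the classical sense, so Theorem \ref{t:deJong} applies to the underlying scheme $X$.

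\textbf{Key steps.} First, I would unwind the target: an object of $\DM^\mathrm{log}(X,\calM_X)$ is a locally free log $F$-crystal with a Verschiebung, and I would establish an additive-data description mirroring the pairs $(\calG,N)$ of Theorem \ref{DescriptionPDivisible-t}. Concretely, using the log crystalline site over $(\Zp,p\Zp,\gamma^\mathrm{can})$ and the fact that $\calM_X$ is locally free with chart $P=(\calM_X/\calO_X^*)_{\bar x}$, I expect a log Dieudonné module to decompose étale locally as a classical Dieudonné module $(\calE,\Phi_\calE,V_\calE)$ together with a nilpotent ``monodromy'' or residue datum valued in $P^\gp$, compatible with $\Phi$ and $V$; this is the log $F$-crystal analogue of the $N\colon \calG^\et(1)\to\calG^\circ\otimes_\Z P^\gp$ appearing on the group side. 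Second, I would check that Kato's functor $\mathbb{D}^\mathrm{log}$ carries the pair $(\calG,N)$ to the corresponding pair $(\mathbb{D}(\calG),\text{residue of }N)$, so that under the two explicit descriptions the functor becomes $\mathbb{D}\times(\text{identity on the }P^\gp\text{-linear datum})$; Lemma \ref{l:Assumption}, which removes Kato's auxiliary PD hypothesis for locally free $\calM_X$, guarantees $\mathbb{D}^\mathrm{log}$ is well-defined in our setting. Third, fully faithfulness and essential surjectivity then follow termwise: the $\calG\mapsto\mathbb{D}(\calG)$ factor is an equivalence by Theorem \ref{t:deJong}, and the monodromy factor is an equivalence essentially by construction (both sides are $\Hom$ into the same $P^\gp$-linear object, matched by the classical theory applied to $\calG^\circ$ and $\calG^\et$). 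Finally, exactness of $\mathbb{D}^\mathrm{log}$ reduces to exactness of $\mathbb{D}$ (Theorem \ref{t:deJong}) together with exactness of the residue construction, and I would verify that a sequence is exact in $\BT^\mathrm{log}$ if and only if its underlying sequence of $p$-divisible groups and of monodromy data are exact.

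\textbf{Gluing and the main obstacle.} Since the construction has been carried out only étale locally, I would need to check that the étale-local equivalences glue to a global equivalence of categories; this is formal once one verifies that $\mathbb{D}^\mathrm{log}$ and the two descriptions are compatible with étale localisation and with the descent data, which they are because the whole setup is functorial and étale-local. The step I expect to be the main obstacle is the second one: pinning down precisely how Kato's functor interacts with the equivalence of Theorem \ref{DescriptionPDivisible-t}, i.e.\ proving that the log Dieudonné module of $(\calG,N)$ is functorially the classical Dieudonné module of $\calG$ decorated by the residue of $N$, with all Frobenius–Verschiebung compatibilities. This requires a careful comparison of Kato's crystalline construction with the Berthelot–Breen–Messing functor on the connected and étale parts, and controlling the $P^\gp$-valued monodromy term through the log structure; everything else either is classical input (Theorem \ref{t:deJong}) or is bookkeeping that reduces cleanly to the non-logarithmic case. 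I would lean on the works of Kato \cite{Kat23} and Inoue \cite{Ino25} for the hardest compatibilities, citing them for the construction of $\mathbb{D}^\mathrm{log}$ and its behaviour under the local description, and supply the reduction to Theorem \ref{t:deJong} as the new organising argument.
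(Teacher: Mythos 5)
Your strategy is genuinely different from the paper's, and it has a gap at exactly the point you yourself flag as ``the main obstacle.'' The paper does not reduce to Theorem \ref{t:deJong} through the local description of Theorem \ref{DescriptionPDivisible-t}. Instead it invokes Inoue's general equivalence \cite[Prop. 5.7.3]{Ino25}, which applies whenever $X$ is quasi-syntomic and, étale locally, admits a free chart $P\to\calM_X$ such that $\tilde{X}\coloneqq X\otimes_{\Z[P]}\Z[\tfrac{1}{\infty}P]$ has a perfect quasi-syntomic cover; the entire content of the paper's proof is the verification of this hypothesis for Noetherian Frobenius-smooth log schemes, achieved by forming the Frobenius perfection $X_{\mathrm{perf}}=\varprojlim(\cdots\xrightarrow{F}X\xrightarrow{F}X)$, which is quasi-syntomic over $X$ by Proposition \ref{p:FrobeniusLogSmooth}, and observing that $X_{\mathrm{perf}}\otimes_{\Z[\frac{1}{p^\infty}P]}\Z[\tfrac{1}{\infty}P]$ is perfect (semi-perfect and pro-étale over $X_{\mathrm{perf}}$). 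None of this appears in your proposal.

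The concrete gap in your route is this: the ``additive-data description'' of $\DM^\mathrm{log}(X,\calM_X)$ as a classical Dieudonné module decorated by a $P^{\gp}$-valued residue datum, and the claim that $\mathbb{D}^{\mathrm{log}}$ carries a pair $(\calG,N)$ to $(\mathbb{D}(\calG),\mathrm{res}(N))$, are precisely where the mathematical content of the theorem lives. You state them as expectations and then propose to cite \cite{Kat23} and \cite{Ino25} ``for the hardest compatibilities,'' which makes the argument circular: those compatibilities essentially \emph{are} the theorem. In addition, Theorem \ref{DescriptionPDivisible-t} is stated over a Noetherian strictly henselian local base, so applying it means passing to strict henselizations rather than to an honest étale cover; gluing an equivalence established only on such stalks back to a global equivalence is not formal and would itself require a descent argument you do not supply. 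Your outline is closest in spirit to Kato's sketch in \cite[Thm. 6.3]{Kat23}, which the paper mentions but which is confined to log smooth schemes over a perfect field (and to $p\neq 2$, via \cite{BK91}); extending it to general Noetherian Frobenius-smooth log schemes is exactly what the paper's detour through Inoue's quasi-syntomic result is designed to accomplish.
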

\begin{proof}In the case when $(X,\calM_X)$ is log smooth over a perfect field of odd characteristic, the proof of the equivalence is sketched in \cite[Thm. 6.3]{Kat23}. The additional condition that $p\neq 2$ is needed there since he uses the results in \cite{BK91}. One can replace [\textit{ibid.}] with Theorem \ref{t:deJong} to include the case $p=2$. 
	
\spa

Inoue proves in \cite[Prop. 5.7.3]{Ino25} that $\mathbb{D}^{\mathrm{log}}$ is an equivalence when $X$ is quasi-syntomic and étale locally on $X$ there exists a free chart $P\to \calM_{X}$ such that $\tilde{X}\coloneqq X\otimes_{\Z[P]} \Z[\tfrac{1}{\infty}P]$ has a perfect quasi-syntomic cover, where $\tfrac{1}{\infty}P\coloneqq \varinjlim_n \tfrac{1}{n}P$. Let us explain why the assumption is satisfied in our situation. We take as $P$ any free chart of $(X,\calM_X)$ and we consider $$X_{\mathrm{perf}}\coloneqq\varprojlim \left(\dots \xrightarrow{F}X\xrightarrow{F} X\right).$$ Again, by Proposition \ref{p:FrobeniusLogSmooth}, the scheme $X_{\mathrm{perf}}$ is quasi-syntomic over $X$. In addition, the scheme $$\tilde{X}_\mathrm{perf}\coloneqq X_{\mathrm{perf}}\otimes_{\Z\left[\tfrac{1}{p^\infty}P\right]} \Z[\tfrac{1}{\infty}P]$$ is perfect, since it is semi-perfect and pro-étale over $X_{\mathrm{perf}}$. The quasi-syntomic cover $\tilde{X}_\mathrm{perf}\to \tilde{X}$ satisfies the desired property.\end{proof}

Over henselian discrete valuation rings, dual representable log $p$-divisible groups correspond to semi-stable $p$-divisible groups, as introduced in \cite{deJ98}, \cite{deJ99}.

\begin{defi}
	Let $R$ be a henselian discrete valuation ring with fraction field $\Omega$. We say that the generic fibre $\calG_\eta\in \BT(\Omega)$ has \textit{good reduction} if it extends to a \bt group over $\Spec(R)$. In this case, we say that $\calG$ is the\textit{ model} of $\calG_\eta$. We also say that $\calG_\eta\in \BT(\Omega)$ has \textit{semi-stable reduction} if there exists a filtration $$ \calG_\eta^\mu\subseteq \calG_\eta^f\subseteq \calG_\eta$$ such that $\calG_\eta^f$ and $\calG_\eta/\calG_\eta^\mu$ have models $\calG_1$ and $\calG_2$, the kernel of the induced map $\calG_1\to \calG_2$ is a $p$-divisible group of multiplicative type, and the cokernel is an étale $p$-divisible group.  We denote by $\BT^{\mathrm{st}}(\Omega)\subseteq \BT(\Omega) $ the category they form.  We also denote by $\BT^{\mathrm{pst}}(\Omega)$ the full subcategory of $\BT(\Omega)$ of \textit{potentially semi-stable} $p$-divisible groups over $\Spec(\Omega)$, namely those $p$-divisible groups which acquire semi-stable reduction after an fppf cover. If $X$ is a smooth curve over a field, we say that a $p$-divisible group over $X$ is semi-stable or potentially semi-stable if it is so when restricted to the fraction fields of the completions at all the points at infinity.
\end{defi}

\begin{theo}[{\cite[Thm. 5.19]{BWZ23}}]\label{BWZ:t}
 Let $R$ be an henselian discrete valuation ring with residue field of characteristic $p$. The restriction to the generic fibre defines an equivalence $$
	\BT^\mathrm{log}(R,M_R)\iso \BT^{\mathrm{st}}(\Omega),$$ where $M_R$ is the divisorial log structure associated to the maximal ideal.
\end{theo}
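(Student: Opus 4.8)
\emph{Proof proposal.} The plan is to reduce to the strictly henselian case, use the explicit description of Theorem~\ref{DescriptionPDivisible-t} to trade log $p$-divisible groups for pairs $(\calG,N)$, and then match the monodromy $N$ with the degeneration data of a semi-stable group. First I would pass to the strict henselization $R^{\mathrm{sh}}$, with residue field $k^{\mathrm{sep}}$ and fraction field $\Omega^{\mathrm{ur}}$. Both $\BT^{\log}(-,M_{-})$ and $\BT^{\mathrm{st}}(-)$ satisfy étale descent along $R\to R^{\mathrm{sh}}$ (the divisorial log structure is preserved, and semi-stability descends along unramified extensions since the filtration and its good-reduction models base change), and the functor ``restriction to the generic fibre'' is manifestly functorial, hence $\Gal(k^{\mathrm{sep}}/k)$-equivariant; so it suffices to prove the equivalence over $R^{\mathrm{sh}}$ and descend. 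Over the strictly henselian $R$, Theorem~\ref{DescriptionPDivisible-t} (with $P=\Gamma(X,\calM_X/\calO_X^*)=\N$, so $P^{\gp}=\Z$) identifies $\BT^{\log}(R,M_R)$ with the category of pairs $(\calG,N)$, where $\calG\in\BT(R)$ has connected--étale sequence $0\to\calG^\circ\to\calG\to\calG^{\et}\to 0$ and $N\colon \calG^{\et}(1)\to\calG^\circ$ is a homomorphism of $p$-divisible groups.

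Next I would make the generic-fibre functor explicit in these terms. The key point is that the generic fibre of the log $p$-divisible group attached to $(\calG,N)$ is not $\calG_\eta$ but the monodromy twist: the extension
\[
0\longrightarrow \calG^\circ_\eta \longrightarrow \calH_\eta \longrightarrow \calG^{\et}_\eta\longrightarrow 0
\]
whose class is the image of the Kummer class $[\pi]\in H^1(\Omega,\Zp(1))$ of a uniformizer under the map induced by $N$ (for a Tate curve $N$ is an isomorphism and one recovers the multiplicative-reduction extension with parameter $\pi$). Computing Kato's generic fibre and verifying that it equals $\calH_\eta$ is the first substantial task. Granting it, the semi-stable filtration is read off from $N$: the saturated image $\calT\subseteq\calG^\circ$ of $N$ is of multiplicative type (as $\calG^{\et}(1)$ is), one sets $\calG^\mu_\eta:=\calT_\eta$ and $\calG^f_\eta:=\calG^\circ_\eta$, and the induced map between the good-reduction models $\calG_1=\calG^\circ$ and $\calG_2=\calG/\calT$ has multiplicative kernel $\calT$ and étale cokernel $\calG^{\et}$; hence $\calH_\eta\in\BT^{\mathrm{st}}(\Omega)$.

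For full faithfulness I would use the rigidity of good reduction. A homomorphism of semi-stable groups preserves the intrinsic toric/good/étale pieces, so it is determined by the induced maps of the good-reduction models $\calG_1,\calG_2$; by de Jong's extension theorem \cite{deJ98} (the equal-characteristic analogue of Tate's Proposition~\ref{good-red-et-intro:p}) such maps extend uniquely over $R$, and compatibility with $N$ is forced by naturality of the monodromy. Conversely a morphism of pairs $(\calG,N)$ restricts to the generic fibre, and $N$ is recovered from the extension class of $\calH_\eta$ paired against $[\pi]$; this matches the two Hom-groups. For essential surjectivity, given $\calG_\eta\in\BT^{\mathrm{st}}(\Omega)$ with filtration $\calG^\mu_\eta\subseteq\calG^f_\eta\subseteq\calG_\eta$, I would take the good-reduction models $\calG_1,\calG_2$ supplied by the definition, set $\calT=\ker(\calG_1\to\calG_2)$ and $\calE=\mathrm{coker}(\calG_1\to\calG_2)$, reassemble a good-reduction $\calG$ over $R$ together with its monodromy $N$ recovered from the (necessarily ``$[\pi]$-valued'') extension class of $\calG_\eta$, and check via the previous computation that its generic fibre is $\calG_\eta$.

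The main obstacle is precisely this dictionary between the algebraic monodromy operator $N\colon\calG^{\et}(1)\to\calG^\circ$ over $R$ and the semi-stable extension class over $\Omega$: one must show both that every twist by $N$ produces a group with the asserted filtration and good-reduction subquotients (using the connected--étale sequence together with Tate's and de Jong's good-reduction results), and, conversely, that the extension class of an arbitrary semi-stable $\calG_\eta$ is exactly of this monodromy-twisted form --- i.e.\ that semi-stability is equivalent to the integrality of the monodromy. Keeping track of the Tate twist $(1)$ and of Cartier duality between the multiplicative part $\calT$ and the étale part $\calE$ throughout this comparison is the delicate bookkeeping on which the proof hinges.
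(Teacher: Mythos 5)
The paper offers no proof of this statement: it is imported verbatim from \cite[Thm.\ 5.19]{BWZ23}, so there is no internal argument to compare yours against. Your outline does follow the natural (and, as far as one can tell, the intended) strategy: reduce by Galois descent to the strictly henselian case, invoke Kato's classification (Theorem \ref{DescriptionPDivisible-t}) to replace a log $p$-divisible group by a pair $(\calG,N)$ with $N\colon\calG^{\et}(1)\to\calG^{\circ}$, and translate $N$ into a Kummer-class twist on the generic fibre. One correction to the dictionary: the generic fibre of the log $p$-divisible group attached to $(\calG,N)$ is the Baer sum of the connected--\'etale extension class $[\calG_\eta]$ with $N_*[\pi]$, not the extension of class $N_*[\pi]$ alone; your formula is only correct when $\calG$ splits as $\calG^\circ\oplus\calG^{\et}$ (as it does for the Tate curve). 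With the corrected formula your identification $\calG_2=\calG/\calT$ still works, because the pushout of $N_*[\pi]$ along $\calG^\circ\to\calG^\circ/\calT$ vanishes.

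The decisive problem is that the two steps you yourself label ``the first substantial task'' and ``the main obstacle'' are not peripheral: they \emph{are} the theorem. You would need (a) an actual computation of Kato's generic-fibre functor on the standard kfl extensions, comparing $H^1_{\mathrm{kfl}}(R,\mu_{p^n})$ with $H^1_{\mathrm{fppf}}(\Omega,\mu_{p^n})$ to produce the Baer-sum description above; and (b) the converse statement that for an arbitrary semi-stable $\calG_\eta$ the difference between its extension class and that of a good-reduction model lies exactly in the image of the monodromy pairing $\Hom_R(\calG^{\et}(1),\calG^\circ)\to\Ext^1_\Omega(\calG^{\et}_\eta,\calG^\circ_\eta)$, i.e.\ that semi-stability forces integrality of $N$. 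Both essential surjectivity and the surjectivity half of full faithfulness reduce to (b), and nothing in your sketch engages with it; likewise the injectivity of the monodromy pairing, which you need so that compatibility with $N$ can be read off from the generic fibre, is asserted rather than argued. Finally, note that the theorem as stated allows $R$ of mixed characteristic, where your appeals to de Jong's extension theorem and to ``connectedness persists on the generic fibre'' fail and must be replaced by Tate's results; as written, your argument only addresses the equal-characteristic case.
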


\section{The overconvergent Dieudonné equivalence}
\label{sec:overconvequiv}In this section, we prove the key equivalence between potentially semi-stable $p$-divisible groups and overconvergent $F$-isocrystals with bounded slopes (Theorem \ref{DM:t}), generalising earlier results of \cite{Tri08} and \cite{Pal22}. We begin with Proposition \ref{DMIsoc:p}, which identifies over curves the isogeny category of Dieudonné modules with coherent $F$-isocrystals having slopes in $[0,1]$. Building on the log Dieudonné theory of the previous section and Trihan's work \cite{Tri08}, we define the overconvergent Dieudonné module functor $\mathbb{D}^\ag$ and prove its essential properties.
\begin{prop}\label{DMIsoc:p}If $(X,\calM_X)$ is a Noetherian Frobenius-smooth log scheme of Krull dimension $\leq 1$, then there is a natural equivalence $$\DM^\mathrm{log}(X,\calM_X)_\Qp\iso \Fisoc^\mathrm{log}(X,\calM_X)^{[0,1]},$$ where $\Fisoc^\mathrm{log}(X,\calM_X)^{[0,1]}\subseteq \Fisoc^\mathrm{log}(X,\calM_X)$ is the full subcategory of coherent log $F$-isocrystals with slopes in $[0,1]$.
\end{prop}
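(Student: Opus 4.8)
The plan is to show that the asserted equivalence is nothing more than the operation of inverting $p$, and that the slope bound $[0,1]$ is exactly what cuts out its essential image; the only genuinely non-formal point will be the construction of a Dieudonné lattice. I would begin with the following reformulation. For a locally free log $F$-crystal $(\calE,\Phi_\calE)$ over a Frobenius-smooth (hence reduced and regular, by Proposition \ref{p:FrobeniusLogSmooth}) base, $\Phi_\calE\colon F^*\calE\to\calE$ is injective, and a Verschiebung $V_\calE$ with $\Phi_\calE\circ V_\calE=V_\calE\circ\Phi_\calE=p$ exists if and only if $\mathrm{coker}(\Phi_\calE)$ is killed by $p$; when it exists it is unique. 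Hence $\DM^\mathrm{log}(X,\calM_X)$ is precisely the full subcategory of locally free log $F$-crystals with $p\calE\subseteq\Phi_\calE(F^*\calE)\subseteq\calE$. The right inclusion (integrality of $\Phi_\calE$) gives all Newton slopes $\geq 0$, the left inclusion (existence of $V_\calE$) gives all slopes $\leq 1$. The functor in question sends $(\calE,\Phi_\calE,V_\calE)$ to $(\calE[1/p],\Phi_\calE)$, and this slope computation shows it lands in $\Fisoc^\mathrm{log}(X,\calM_X)^{[0,1]}$.

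Full faithfulness I would treat as the standard isogeny-category argument. Since $V_\calE$ is determined by $\Phi_\calE$ after inverting $p$, a morphism of the underlying $F$-crystals automatically respects the Verschiebungen, so it suffices to prove that $\Hom_{\DM^\mathrm{log}}(\calE,\calE')\otimes_\Z\Qp\to\Hom(\calE[1/p],\calE'[1/p])$ is bijective. Injectivity holds because the source is $p$-torsion-free, and surjectivity because a horizontal, $\Phi$-equivariant map of coherent crystals $\calE[1/p]\to\calE'[1/p]$ lands in $\calE'$ after multiplication by a sufficiently large power of $p$, yielding a genuine crystal morphism.

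The substance is essential surjectivity: every coherent log $F$-isocrystal $\calM$ with slopes in $[0,1]$ must carry a locally free log $F$-crystal lattice $\calE$ with $p\calE\subseteq\Phi(F^*\calE)\subseteq\calE$. Because $X$ is regular of dimension $\leq 1$, a coherent torsion-free crystal is locally free, so $\calM$ is a locally free isocrystal and contains a locally free crystal lattice $\calE_0$. A \emph{naive} Frobenius saturation does not suffice: a generic $\Phi$-stable lattice of a slope-$[0,1]$ isocrystal may have Frobenius cokernel with elementary divisors larger than $p$ (the first such phenomenon occurs in rank $3$), so it need not admit a Verschiebung. Instead I would saturate under \emph{both} $\Phi$ and $V=p\Phi^{-1}$: using $\Phi V=V\Phi=p$, every word in $\Phi,V$ reduces to $\Phi^a$ or $V^b$ up to a power of $p$, so the candidate lattice is $\calE\coloneqq\sum_{a\geq 0}\Phi^a(F^{a*}\calE_0)+\sum_{b\geq 0}V^b(\calE_0)$. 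One checks directly that $\calE$ is stable under both $\Phi$ and $V$; the slope bound $\geq 0$ confines the first sum inside a fixed multiple of a $\Phi$-stable lattice, and the dual slope bound $\leq 1$ does the same for the second sum, so by Noetherianity both increasing unions stabilise and $\calE$ is coherent. The resulting $\calE$ is a Dieudonné lattice whose image under the functor is $\calM$, and naturality in $(X,\calM_X)$ follows formally from functoriality of crystalline pullback and uniqueness of $V_\calE$.

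The hard part will be carrying this lattice construction out \emph{globally} rather than at a single point, and this is exactly where the dimension hypothesis is essential. Over a closed point or the (possibly imperfect) generic point one may appeal to the Dieudonné--Manin picture to see that the bidirectional saturation has elementary divisors in $\{1,p\}$; the difficulty is to know that the saturated subsheaf of $\calM$ is a coherent crystal that is \emph{locally free}, and that the local choices are forced to agree along the finitely many points where the Newton polygon jumps. Both points rest on $X$ being regular of Krull dimension $\leq 1$, where reflexive coherent modules over the crystalline lifts are locally free and the codimension-$\geq 2$ pathologies that obstruct the higher-dimensional statement cannot arise; indeed in higher dimension a slope-$[0,1]$ isocrystal need not admit any locally free Dieudonné lattice, which is precisely why the proposition is restricted to dimension $\leq 1$.
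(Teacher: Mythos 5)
Your overall architecture matches the paper's: full faithfulness is the formal isogeny-category argument, and the substance is to produce, inside a given slope-$[0,1]$ isocrystal, a locally free lattice stable under both $\Phi$ and $V=p\Phi^{-1}$, obtained by saturating a reflexive crystal lattice under $V$. (Your first sum $\sum_{a\geq 0}\Phi^a(F^{a*}\calE_0)$ is already contained in $\calE_0$ once $\calE_0$ is a crystal lattice, so your ``bidirectional'' saturation reduces to the paper's $\calE_V=\sum_{b\geq 0}V^b(\calE_0)$.) The gap is that you never prove the one statement carrying all the weight: that this $V$-saturation is bounded, i.e.\ that there is a single $n$ with $V^m(\calE_0)\subseteq p^{-n}\calE_0$ for all $m$, equivalently $p^{m+n}\calE_0\subseteq\Phi^m(F^{m*}\calE_0)$. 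You assert that ``the dual slope bound $\leq 1$ does the same for the second sum, so by Noetherianity both increasing unions stabilise,'' and later concede this is ``the hard part,'' but restating the slope hypothesis is not an argument: Newton slopes are defined pointwise via Dieudonn\'e--Manin over perfections of residue fields, and the entire content of the proposition is converting that pointwise information into a uniform integral bound over the non-perfect base.

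The paper closes this as follows. Since $A_0$ is a regular domain of dimension $\leq 1$, its flat $p$-complete lift $A$ is regular of dimension $\leq 2$, so the reflexive module $E$ is projective by \stack{0B3N}; an embedding $A_0\hookrightarrow\Omega$ into an algebraically closed field yields a $p$-saturated inclusion $A\hookrightarrow W(\Omega)$, hence $E\hookrightarrow E\otimes_A W(\Omega)$ is $p$-saturated, and the bound $p^{m+n}E\subseteq\Phi_E^m(E)$ descends from $W(\Omega)$, where it is an explicit Dieudonn\'e--Manin computation ($n=s-1$ for an elementary crystal of slope $r/s$). Note also that your closing worry about ``local choices being forced to agree at Newton jumps'' is misplaced: the saturation is canonical, so there is nothing to glue. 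The dimension hypothesis enters exactly through the projectivity of reflexive modules over the two-dimensional regular lift and the resulting $p$-saturatedness of $E$ in $E_\Omega$, not through a gluing problem. Without this uniform-bound step your construction of the Dieudonn\'e lattice is incomplete.
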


\begin{proof}
	We have to prove that every coherent log $F$-isocrystal $(\calE,\Phi_\calE)$ in $\Fisoc^\mathrm{log}(X,\calM_X)^{[0,1]}$ is isogenous to a log Dieudonné module.  
	Up to replacing $\calE$ with its double dual, we may assume $\calE$ reflexive.  
	Let $\calE_V$ be the log subcrystal of $\calE[\tfrac{1}{p}]$ generated by $V^m_\calE(\calE)$ for all $m\geq 0$, where  
	$V_\calE \colon F^*\calE\to \calE[\tfrac{1}{p}]$ is defined by $V_{\calE}\coloneqq p\Phi_{\calE}^{-1}$.  
	Note that $\calE_V$ is a log $F$-crystal because $\Phi_\calE \circ V_\calE^{m+1}=pV_\calE^{m}$.  
	We want to prove that $\calE_V$ is coherent.
	
	\spa
	
	 By Proposition \ref{p:FrobeniusLogSmooth}, we may assume $X=\Spec(A_0)$ where $A_0$ is a regular domain which admits a flat $p$-complete lift $A$ over $\Zp$. Note that $A$ is regular as well of Krull dimension at most $2$. By \stack{0B3N}, this implies that $E$ is projective. Write $(E,\Phi_E)$ for the integrable log connection with Frobenius over $A$ associated to $(\calE,\Phi_\calE)$. We want to prove that there exists $n>0$ such that 
	\begin{equation}\label{Condition:eq}
		p^{m+n}E\subseteq \Phi_E^m(E)
	\end{equation} for all $m\geq 0$. This would imply that $V_E^m(E)\subseteq \tfrac{1}{p^n}E$ for every $m$, and this, in turn, would imply that $E_V$ is coherent since $\tfrac{1}{p^n}E$ is so.
	
	\spa
	Choose an embedding of $A_0$ in an algebraically closed field $\Omega$. This induces a $p$-saturated inclusion $A\hookrightarrow W(\Omega)$ (i.e., $pW(\Omega)\cap A=pA$). Since $E$ is projective, we deduce that $E\hookrightarrow  E_\Omega\coloneqq E\otimes_A W(\Omega)$ is also $p$-saturated and injective. Combining this with the fact that $\Phi_E[\tfrac{1}{p}]$ is an isomorphism, we deduce that for every $m\geq 1$, we have $$\Phi^{-m}_{E_\Omega}(E)\subseteq (F^m)^*(E[\tfrac{1}{p}]\cap E_\Omega)=(F^m)^*E.$$ This shows that in order to prove \eqref{Condition:eq}, it is enough to prove that $$p^{m+n}E_\Omega\subseteq \Phi_{E_\Omega}^m(E_\Omega).$$ We can now invoke Dieudonné--Manin classification and verify this condition on elementary $F$-crystals over $\Omega$. For the one of slope $r/s$ with $(r,s)=1$, it is enough to take $n=s-1$.
\end{proof}

\begin{cons}Let $X$ be a smooth curve over a perfect field $k$. Trihan proved in \cite{Tri08} that the restriction of $\mathbb{D}_\Qp$ to $\BT^{\mathrm{st}}(X)_\Qp^{\mathrm{op}}$ factors through $$\iota\colon \Foi(X)\hookrightarrow \Fisoc(X).$$ We then obtain a $\Qp$-linear functor
	$$\mathbb{D}^\ag:\BT^{\mathrm{st}}(X)_\Qp^{\mathrm{op}}\to \Foi(X).$$ By fpqc descent, we obtain a fully faithful functor $$\mathbb{D}^\ag:\BT^{\mathrm{pst}}(X)_\Qp^{\mathrm{op}}\to \Foi(X)$$ for $X$ a smooth curve over any characteristic $p$ field (not assumed to be perfect), which we call the \textit{overconvergent Dieudonné module functor}. Similarly, we have a fully faithful functor  	$$\mathbb{D}^\ag:\BT^{\mathrm{pst}}(k((t)))_\Qp^{\mathrm{op}}\to\Foi(k((t))),$$ where $\Foi(k((t)))$ is as in \cite[Rmk. 2.11]{Ked16}.
\end{cons}

\begin{theo}\label{DM:t}
	Let $k$ be a characteristic $p$ field with a finite $p$-basis and let $X$ be a smooth curve over $k$. The contravariant overconvergent Dieudonné module functor induces an equivalence
	$$\mathbb{D}^\ag:\BT^{\mathrm{pst}}(X)_\Qp^{\mathrm{op}}\iso \Foi(X)^{[0,1]}$$ of $\Qp$-linear abelian categories. Similarly, we have an equivalence $$\mathbb{D}^\ag:\BT^{\mathrm{pst}}(k((t)))_\Qp^{\mathrm{op}}\iso\Foi(k((t)))^{[0,1]}$$ of $\Qp$-linear abelian categories.
\end{theo}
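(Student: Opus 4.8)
The functor $\mathbb{D}^\ag$ has already been shown to be fully faithful in the construction preceding the theorem (Trihan's theorem for semi-stable objects, extended to $\BT^{\mathrm{pst}}$ by fpqc descent), and both $\BT^{\mathrm{pst}}(X)_\Qp$ and $\Foi(X)^{[0,1]}$ are $\Qp$-linear abelian categories (the slope condition is closed under subobjects and quotients, so $\Foi(X)^{[0,1]}$ is an abelian subcategory). Since any fully faithful, essentially surjective functor between abelian categories is automatically an exact equivalence, the plan is to reduce the theorem to two assertions: that the essential image lands in $\Foi(X)^{[0,1]}$, and that the functor is essentially surjective onto it. The slope bound is immediate, because a log Dieudonné module carries a Verschiebung $V$ with $\Phi\circ V=V\circ\Phi=p$, forcing all Newton slopes into $[0,1]$, a property preserved by $\iota$. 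So the heart of the matter is essential surjectivity.

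I would first treat the local statement over $k((t))$, which simultaneously governs the boundary behaviour of the global case. Let $R=k[[t]]$ with the divisorial log structure $M_R$ at the closed point; since $k$ has a finite $p$-basis, adjoining $t$ gives $R$ a finite absolute $p$-basis, so $(R,M_R)$ is a Noetherian Frobenius-smooth log scheme of Krull dimension one. I would then assemble the chain of equivalences: Theorem \ref{BWZ:t} identifies $\BT^{\mathrm{log}}(R,M_R)$ with $\BT^{\mathrm{st}}(k((t)))$; Theorem \ref{KatoEquivalence:t} identifies $\BT^{\mathrm{log}}(R,M_R)^{\mathrm{op}}$ with $\DM^{\mathrm{log}}(R,M_R)$; and Proposition \ref{DMIsoc:p} identifies the isogeny category $\DM^{\mathrm{log}}(R,M_R)_\Qp$ with $\Fisoc^{\mathrm{log}}(R,M_R)^{[0,1]}$. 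The final link is the comparison between these coherent log $F$-isocrystals and the local overconvergent objects of $\Foi(k((t)))$: an isocrystal with unipotent monodromy extends uniquely across the closed point to a log $F$-isocrystal with nilpotent residues, and conversely. Composing identifies $\BT^{\mathrm{st}}(k((t)))_\Qp^{\mathrm{op}}$ with the unipotent-monodromy objects of $\Foi(k((t)))^{[0,1]}$; since every overconvergent $F$-isocrystal is quasi-unipotent (the $p$-adic local monodromy theorem), passing to a finite separable extension to absorb quasi-unipotence and descending yields the full equivalence with $\BT^{\mathrm{pst}}$.

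The global statement runs the same chain over a log compactification. Fix a smooth compactification $(Y,D)$ of $X$ with $D$ reduced, so that $(Y,M_Y)$ is a Noetherian Frobenius-smooth log scheme of dimension one by the lemma following Definition \ref{d:FrobeniusLogSmooth}. Given $\calE\in\Foi(X)^{[0,1]}$, I would invoke the $p$-adic local monodromy theorem at each point of $D$ and pass to a finite cover $X'\to X$, tamely ramified along $D$, over which $\calE$ acquires unipotent monodromy at the boundary; it then extends to a \emph{coherent} log $F$-isocrystal $(\calE')^{\mathrm{log}}$ on the associated log compactification with nilpotent residues. Proposition \ref{DMIsoc:p} presents $(\calE')^{\mathrm{log}}$ as a log Dieudonné module up to isogeny, Theorem \ref{KatoEquivalence:t} realises it as $\mathbb{D}^{\mathrm{log}}(\calG')$ for a log $p$-divisible group $\calG'$, and the local analysis above (Theorem \ref{BWZ:t} at each point of $D$) shows $\calG'|_{X'}$ has semi-stable reduction, hence lies in $\BT^{\mathrm{st}}(X')_\Qp$ and maps to $\calE|_{X'}$. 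Descending along $X'\to X$ produces a potentially semi-stable $p$-divisible group mapping to $\calE$, the descent datum being effective in the isogeny category by full faithfulness.

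The main obstacle is the comparison between the log-crystalline (convergent) category $\Fisoc^{\mathrm{log}}$ and the overconvergent category $\Foi$, that is, controlling the residues of the log extension so that overconvergence matches exactly the semi-stable reduction condition on the $p$-divisible side. This is precisely where the local monodromy theorem and the correspondence between unipotent monodromy and nilpotent residues are indispensable, and where care is needed to keep the descent along the ramified cover within the isogeny categories and compatible with the boundary log structure; verifying that this matching agrees with Trihan's functor at the points of $D$ is what glues the local and global pictures together.
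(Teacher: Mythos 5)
Your argument is essentially the paper's: after reducing to essential surjectivity, both proofs pass to a finite cover over which the overconvergent $F$-isocrystal extends to a coherent log $F$-isocrystal on the log compactification (you derive this from the $p$-adic local monodromy theorem, while the paper cites Kedlaya's semistable reduction theorem \cite{KedCurve}, which packages exactly that), and then run the same chain Proposition \ref{DMIsoc:p} $\to$ Theorem \ref{KatoEquivalence:t} $\to$ Theorem \ref{BWZ:t} before descending. The only inaccuracy is your claim that the cover can be taken tamely ramified along $D$: in general it must be allowed to be wildly ramified (e.g.\ Artin--Schreier covers), but since nothing in your argument uses tameness this is harmless.
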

\begin{proof}We prove the first part; the second part is proven analogously. The functor $\mathbb{D}^\ag$ is fully faithful thanks to Theorem \ref{t:deJong}. It remains to prove the essential surjectivity.  Consider $(\calE^\ag,\Phi_{\calE^\ag})\in \Foi(X)^{[0,1]}$, we need to find a potentially semi-stable $p$-divisible group $\calG$ over $X$ (up to isogeny) such that $\mathbb{D}^\ag(\calG) \simeq (\calE^\ag, \Phi_{\calE^\ag})$. Let $Y$ be a smooth compactification of $X$ and $\calM_Y$ the log structure associated to the divisor at infinity. By \cite{KedCurve}, there exists an fppf cover of $X$ such that the base change of  $(\calE^\ag,\Phi_{\calE^\ag})$ is in the essential image of $$\Fisoc^{\mathrm{log}}(Y,\calM_Y)^{[0,1]}\to \Foi(X)^{[0,1]}.$$ By Proposition \ref{DMIsoc:p}, we deduce that it is also in the essential image of $$\DM^\mathrm{log}(Y,\calM_Y)\to \Foi(X)^{[0,1]}.$$ By Theorem \ref{KatoEquivalence:t}, we deduce that there exists a log $p$-divisible group $\calG^\mathrm{log}$ corresponding to $(\calE^\ag, \Phi_{\calE^\ag})$.  Thanks to Theorem \ref{BWZ:t}, the restriction of $\calG^\mathrm{log}$ to $X$ is semi-stable. This ends the proof.\end{proof}

\section{Main theorem}
\label{sec:main}
We now combine the results of the previous sections to prove our main theorem on the overconvergence of isoclinic quotients of minimal slope. The proof hinges on Tate's local extension property for étale subgroups of $p$-divisible groups (Proposition \ref{good-red-et:p}), which we review in detail. Corollary \ref{local-et-sub:c} extends this property to the semi-stable setting. Using the overconvergent Dieudonné equivalence of Theorem \ref{DM:t} and Tsuzuki's characterization of unit-root overconvergent $F$-isocrystals \cite{TsuFLM}, we prove Theorem \ref{a-UR01:t}.


\begin{prop}[{\cite[Prop. 12]{Tat67}}]\label{good-red-et:p}Let $R$ be a discrete valuation ring. For a $p$-divisible group $\calG_R$ over $R$, every étale \bt subgroup $\calH_\eta\subseteq\calG_\eta$ of the generic fibre has good reduction.
\end{prop}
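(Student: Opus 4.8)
The plan is to produce a \bt model of $\calH_\eta$ over $R$ by hand and then read off good reduction; recall that good reduction asks only for \emph{some} \bt group over $R$ with generic fibre $\calH_\eta$. The naive construction is the scheme-theoretic closure: for each $n$, let $\mathcal{C}_n\subseteq\calG_R[p^n]$ be the scheme-theoretic closure of $\calH_\eta[p^n]$. Since the closure of a generic-fibre subscheme is torsion free over the discrete valuation ring $R$, hence flat, each $\mathcal{C}_n$ is a finite flat subgroup scheme of order $p^{nh}$, where $h\coloneqq \mathrm{ht}(\calH_\eta)$. Whether the system $(\mathcal{C}_n)_n$ assembles into a \bt group comes down to the flatness of the $p$-power kernels $\mathcal{C}_{n+m}[p^m]$, equivalently to whether the special fibre of the closure has full order $p^{nh}$ — that is, whether it stays étale and picks up no connected part. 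So the entire content lies in the special fibre, which I would control through the connected--étale structure of $\calG_R$ rather than through the closure itself.

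To this end I would first reduce to $R$ complete (a model over $\hat R$ compatible with $\calH_\eta$ descends along the faithfully flat map $R\to\hat R$) and invoke the connected--étale sequence $0\to\calG^\circ\to\calG_R\to\calG^\et\to0$ over $R$. The crucial claim is that the composite $\calH_\eta\to\calG_\eta\to\calG^\et_\eta$ is a closed immersion. When $\Char K=p$ this is immediate: the generic fibre $\calG^\circ_\eta$ of the connected part remains connected while $\calH_\eta$ is étale, and $\Hom(\text{\'etale},\text{connected})=0$ over a field forces $\calH_\eta\cap\calG^\circ_\eta=0$. Granting the claim, $\calG^\et$ is étale over $R$ and therefore corresponds to a finite free $\Zp$-module $T\coloneqq T_p(\calG^\et_\eta)$ carrying an unramified action of $\Gal(K^\sep/K)$; the subgroup $\calH_\eta$ then corresponds to a $\Gal(K^\sep/K)$-stable saturated sublattice $T_H\subseteq T$, saturated because the quotient $\calG^\et_\eta/\calH_\eta$ is again étale. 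A subrepresentation of an unramified representation is unramified, so $T_H$ defines an étale \bt group over $R$ extending $\calH_\eta$. This is the desired model, and it proves good reduction.

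The main obstacle is exactly the closed-immersion claim, i.e. the assertion that the closure acquires no connected part on the special fibre. In equal characteristic $p$ it reduces to the clean vanishing $\Hom(\text{\'etale},\text{connected})=0$ used above. In the mixed-characteristic setting originally treated by Tate it is genuinely deeper: over a characteristic-$0$ field every \bt group is étale, so the connected--étale splitting is invisible on the generic fibre, and the statement instead amounts to Tate's theorem that sub-\bt groups of $\calG_\eta$ correspond to $\Gal(K^\sep/K)$-stable saturated sublattices of $T_p(\calG_\eta)$ — a saturated subrepresentation of the Tate module of a good-reduction group again being the Tate module of a \bt group over $R$. This rests on Tate's Hodge--Tate theory and the full faithfulness of the generic-fibre functor. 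I therefore expect to cite Tate in mixed characteristic and to give the self-contained connected--étale argument in equal characteristic $p$, which is the case relevant to the applications in this paper; the flatness and order bookkeeping surrounding the closure is routine by comparison.
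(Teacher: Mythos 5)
Your equal-characteristic argument has a genuine gap at exactly the step you flag as crucial, and the justification you give for that step is false. You claim that ``the generic fibre $\calG^\circ_\eta$ of the connected part remains connected''. This fails in equal characteristic $p$: take $R=k[[t]]$ and let $\calG_R$ be the $p$-divisible group of an elliptic curve over $R$ with supersingular special fibre and ordinary generic fibre. Since $R$ is henselian, connectedness of the finite flat group schemes $\calG_R[p^n]$ is detected on the special fibre, which is infinitesimal; hence $\calG_R=\calG_R^\circ$. Yet $\calG_\eta$ is the $p$-divisible group of an ordinary elliptic curve over $K=k((t))$, so $\calG_\eta[p]$ has a nontrivial \'etale quotient and is disconnected. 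The underlying issue is that the Newton polygon can drop at the generic point, so ``connected over $R$'' does not pass to the generic fibre. What your argument actually needs is the weaker statement $\Hom_K(\calH_\eta,\calG^\circ_\eta)=0$ for $\calH_\eta$ \'etale, i.e.\ that the generic fibre of a connected $p$-divisible group over $R$ has no nonzero \'etale $p$-divisible subgroup. That statement is true, but it is not formal: it is precisely the special case $\calG_R=\calG_R^\circ$ of the proposition you are proving, and establishing it requires a real argument (for instance a valuation estimate on torsion points of the formal Lie group using $[p]=V\circ F$, carried out over the integral closure of $R$ in $K^{\mathrm{sep}}$) that your write-up does not supply. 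Until that is filled in, the reduction to an unramified subrepresentation of $T_p(\calG^{\et}_\eta)$ does not get off the ground.

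The paper avoids all of this by reproducing Tate's own proof of \cite[Prop. 12]{Tat67}, which is elementary and uniform in the characteristic: one forms the scheme-theoretic closures $\calH_n$ of $\calH_\eta[p^n]$ in $\calG_R[p^n]$, observes that the successive quotients $\calH_{n+1}/\calH_n$ are finite flat, killed by $p$, and give an increasing chain of $R$-orders inside the fixed \'etale algebra of $\calH_\eta[p]$ over the fraction field; finiteness of the normalization forces the chain to stabilize, and the shifted system $\calH_{n_0+n}/\calH_{n_0}$ is then a $p$-divisible model of $\calH_\eta$. No connected--\'etale sequence, no Galois theory, and no case distinction on the characteristic enter. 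Relatedly, your plan to handle mixed characteristic by citing ``Hodge--Tate theory and full faithfulness'' imports far more than is needed: Proposition 12 of Tate's paper is exactly the closure-and-orders argument and is independent of his Theorem 4. I would recommend dropping the connected--\'etale route entirely; the ``flatness and order bookkeeping'' you describe as routine is in fact the whole proof.
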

\begin{proof}
    While Theorem 4 in \cite{Tat67} requires characteristic $0$ and its proof does not directly translate to characteristic $p$ (where, to our knowledge, only proofs using overconvergent techniques exist \cite{deJ98}), the characteristic assumption is unnecessary for this specific proposition. The proof is reproduced here for completeness.

\spa

For each $n$, let $\mathcal{H}_n$ be the scheme-theoretic closure of $\mathcal{H}_\eta[p^n]$ in $\mathcal{G}[p^n]$. The scheme $\mathcal{H}_n$ defines a finite flat subgroup scheme of $\mathcal{G}[p^n]$ over $R$ such that $\mathcal{H}_{n,\eta} = \mathcal{H}_\eta[p^n]$. In addition, the inclusions $\mathcal{G}[p^n] \hookrightarrow \mathcal{G}[p^{n+1}]$ induce closed immersions $\mathcal{H}_n \hookrightarrow \mathcal{H}_{n+1}$. We obtain an inductive system $(\mathcal{H}_n)_{n\in\mathbb{N}}$ which may not yet be $p$-divisible. Define the finite flat group schemes $\mathcal{D}_n$ over $R$ by
 $\mathcal{H}_{n+1} / \mathcal{H}_n.$
On the generic fiber, we have
$$\mathcal{D}_{n,\eta} = \mathcal{H}_\eta[p^{n+1}] / \mathcal{H}_\eta[p^n] \iso \mathcal{H}_\eta[p],$$
which is \'{e}tale. Since $\mathcal{D}_n$ is flat and the generic fibre is killed by $p$, we also have $p\mathcal{D}_n=0$. If $A_n$ is the $R$-algebra associated to $\mathcal{D}_n$, then it is a finite flat $R$-algebra such that $A_n \otimes_R \Omega$ is isomorphic to the $\Omega$-algebra of $\mathcal{H}_\eta[p]$, denoted by $A_\eta$. We deduce that there is a natural inclusion of $R$-orders $$ A_0 \subseteq A_1 \subseteq A_2 \subseteq \cdots $$ inside $A_\eta$. Since $A_\eta$ is finite-dimensional and étale over $\Omega$, the integral closure of $R$ inside $A_\eta$ is a finite $R$-algebra \stack{032L}. Therefore, the sequence stabilizes: there exists $n_0 \in \mathbb{N}$ such that $A_n = A_{n_0}$ for all $n \geq n_0.$ This implies that the morphism $\mathcal{D}_{n+n_0}\to \mathcal{D}_{n_0}$ induced by the multiplication by $p^n$ is an isomorphism for every $n\geq 0$.

\spa

We define a new system $(\tilde{\mathcal{H}}_n)_{n\in \mathbb{N}}$ by
$\tilde{\mathcal{H}}_n \coloneqq \mathcal{H}_{n_0+n} / \mathcal{H}_{n_0}.$
Each $\tilde{\mathcal{H}}_n$ is a finite flat group scheme over $R$ and the multiplication by $p^{n_0}$ induces an isomorphism
$$\tilde{\mathcal{H}}_{n,\eta} = \mathcal{H}_\eta[p^{n_0 + n}] / \mathcal{H}_\eta[p^{n_0}] \simeq \mathcal{H}_\eta[p^n].$$ We want to show that $(\tilde{\mathcal{H}}_n)_{n\in \mathbb{N}}$ is $p$-divisible; it is enough to show that for each $n \geq 0$, the sequence
$$0 \to \tilde{\mathcal{H}}_n \to \tilde{\mathcal{H}}_{n+1} \xrightarrow{p^n} \tilde{\mathcal{H}}_{1} \to 0$$
is exact. The multiplication-by-$p^n$ map
$$p^n : \tilde{\mathcal{H}}_{n+1} = \mathcal{H}_{n_0+n+1} / \mathcal{H}_{n_0} \to \mathcal{H}_{n_0+1} / \mathcal{H}_{n_0} = \tilde{\mathcal{H}}_{1}$$
factors as $$\tilde{\mathcal{H}}_{n+1} \xrightarrow{\alpha} \mathcal{H}_{n_0+n+1} / \mathcal{H}_{n_0+n}=\mathcal{D}_{n_0+n} \xrightarrow{\beta} \mathcal{D}_{n_0}=\tilde{\mathcal{H}}_{1},$$
where $\alpha$ is the canonical projection and $\beta$ is induced by the multiplication by $p^n$. By construction, $\beta$ is an isomorphism. Hence, the kernel of $p^n = \beta \circ \alpha$ is the same as the kernel of $\alpha$, which is
$$\mathcal{H}_{n_0+n} / \mathcal{H}_{n_0} = \tilde{\mathcal{H}}_n.$$
Thus, $(\tilde{\mathcal{H}}_n)_{n\in \mathbb{N}}$ is an integral model of $\mathcal{H}_\eta$, as we wanted.
\end{proof}

\begin{coro}\label{local-et-sub:c}Let $R$ be a henselian discrete valuation ring of characteristic $p$ and let $\calG_\eta$ be a \bt group over $\Omega$ with semi-stable reduction. Every étale \bt subgroup $\calH_\eta\subseteq \calG_\eta$ has good reduction.
\end{coro}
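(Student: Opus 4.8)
The plan is to deduce the statement from Tate's Proposition \ref{good-red-et:p}, whose only missing hypothesis is that the ambient group have good reduction. Semi-stability does not guarantee this for $\calG_\eta$ itself, but it does guarantee it for a suitable quotient, and the whole point will be to push $\calH_\eta$ into that quotient. By the definition of semi-stable reduction there is a filtration $\calG_\eta^\mu\subseteq \calG_\eta^f\subseteq \calG_\eta$ in which $\calG_\eta/\calG_\eta^\mu$ admits a model $\calG_2$ over $R$, and in which $\calG_\eta^\mu$ is of multiplicative type, since it is the generic fibre of the multiplicative-type kernel of $\calG_1\to \calG_2$. First I would form the quotient morphism $\pi\colon \calG_\eta\to \calG_\eta/\calG_\eta^\mu=\calG_{2,\eta}$ and restrict it to $\calH_\eta$.

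The key step is to show that $\pi|_{\calH_\eta}$ is a monomorphism, that is, $\calH_\eta\cap \calG_\eta^\mu=0$. I would argue level by level: for each $n$ the finite group scheme $\calH_\eta[p^n]$ is étale, whereas $\calG_\eta^\mu[p^n]$ is of multiplicative type, hence connected and infinitesimal over the characteristic-$p$ field $\Omega$. Their scheme-theoretic intersection inside $\calG_\eta[p^n]$ is then a closed subgroup scheme of an étale group (so reduced) and of an infinitesimal group (so with trivial reduction), whence it is trivial. Thus $\pi$ identifies $\calH_\eta$ with an étale $p$-divisible subgroup of $\calG_{2,\eta}$.

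It then remains to apply Proposition \ref{good-red-et:p} to the $p$-divisible group $\calG_2$ over the discrete valuation ring $R$: the étale subgroup $\pi(\calH_\eta)\subseteq \calG_{2,\eta}$ has good reduction. Since $\pi|_{\calH_\eta}$ is an isomorphism onto its image, $\calH_\eta\cong \pi(\calH_\eta)$ as $p$-divisible groups over $\Omega$, and therefore $\calH_\eta$ has good reduction as well.

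I expect the only point requiring genuine care to be the triviality of $\calH_\eta\cap \calG_\eta^\mu$. The subtlety is that one must carry out the ``étale $\cap$ multiplicative type $=0$'' argument at the level of finite group schemes, using that a closed subgroup scheme of an infinitesimal group scheme is again infinitesimal so that reducedness forces triviality, rather than merely invoking the vanishing of $\Hom(\calH_\eta,\calG_\eta^\mu)$. Everything else is a formal consequence of the definition of semi-stable reduction together with Proposition \ref{good-red-et:p}.
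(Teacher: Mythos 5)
Your proposal is correct and follows essentially the same route as the paper: use the semi-stable filtration to see that $\calG_\eta^\mu$ is of multiplicative type, conclude that $\calH_\eta\to\calG_\eta/\calG_\eta^\mu$ is injective because an \'etale group scheme meets an infinitesimal one trivially in characteristic $p$, and then apply Proposition \ref{good-red-et:p} to the model $\calG_2$ of $\calG_\eta/\calG_\eta^\mu$. The only difference is that you spell out the level-by-level ``\'etale $\cap$ multiplicative $=0$'' argument, which the paper compresses into a single sentence.
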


\begin{proof}
Let $\calG_\eta^\mu\subseteq \calG_\eta^f\subseteq \calG_\eta$ be the filtration associated to the semi-stable model. The group $\calG_\eta^\mu$ is contained in the generic fibre of $\mathrm{Ker}(\calG_1\to \calG_2)$, thus it is of multiplicative type. Since $R$ is of characteristic $p$, the natural map $\calH_\eta\to \calG_\eta/\calG_\eta^\mu$ is then injective. By Proposition \ref{good-red-et:p}, since $\calG_\eta/\calG_\eta^\mu$ has good reduction, $\calH_\eta$ has good reduction as well. This proves the desired result.
\end{proof}



We are now ready to prove the main theorem. Let $k$ be a perfect field of positive characteristic $p$ and let $X$ be a smooth $k$-scheme.

\begin{theo}\label{a-UR01:t}

	Let $\calE$ be a $\dag$-extendable $F$-isocrystal over $X$ with slopes in the interval $[s,s+1]$ for some $s\in \Q$. Every isoclinic quotient $(\calE,\Phi_\calE)\twoheadrightarrow (\calF,\Phi_\calF)$ of slope $s$ is $\dag$-extendable. In addition, if $\calE$ extends to a log $F$-isocrystal over a smooth compactification $X\subseteq Y$ with $Y\setminus X$ a normal crossing divisor, then $(\calF,\Phi_\calF)$ extends to $Y$. 
\end{theo}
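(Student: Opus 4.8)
\emph{Plan.} The strategy is to reduce to the unit-root case, translate the quotient into an étale subgroup of a $p$-divisible group via the overconvergent Dieudonné equivalence, control its reduction at infinity with Tate's theorem, and finally globalise by means of Tsuzuki's characterisation of unit-root overconvergence.

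\textbf{Step 1: reduction to the unit-root case.} Write $s=a/b$ in lowest terms and let $\calS$ be the constant isoclinic $F$-isocrystal over $X$ of rank $b$ and slope $-s$ (the pullback to $X$ of the standard object over $\Spec k$, which exists since $k$ is perfect). Being constant, $\calS$ is overconvergent, so $\calE\otimes\calS$ is again $\dag$-extendable, its slopes lie in $[0,1]$, and the quotient $\calE\otimes\calS\twoheadrightarrow\calF\otimes\calS$ is unit-root. I would first prove the statement for $\calF\otimes\calS$ and then descend: since $\rk\calS=b$ is invertible in $\Qp$, the trace map exhibits the unit object as a direct summand of $\End(\calS)\cong\calS\otimes\calS^\vee$, so $\calF$ is a direct summand of $(\calF\otimes\calS)\otimes\calS^\vee$. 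As $\calS^\vee$ is overconvergent and $\dag$-extendable objects are closed under tensor products and direct summands, $\dag$-extendability of $\calF\otimes\calS$ forces that of $\calF$. Hence I may assume $s=0$, so that $\calE$ has slopes in $[0,1]$ and $\calF$ is unit-root.

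\textbf{Step 2: reduction to local monodromy.} Since $\calF$ is a unit-root convergent $F$-isocrystal, Tsuzuki's theorem \cite{TsuFLM} reduces $\dag$-extendability to the finiteness of the local monodromy of $\calF$ at each codimension-one point at infinity. Fix a normal compactification of $X$; for a boundary divisor with generic point $v$, the complete local field is $K_v=\kappa_v((t))$, where the residue field $\kappa_v$ is finitely generated over the perfect field $k$ and therefore admits a finite $p$-basis. This is exactly the hypothesis under which the local form of the overconvergent Dieudonné equivalence (Theorem \ref{DM:t}) is available over $K_v$.

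\textbf{Step 3: the local computation.} Restricting to $K_v$, the object $\calE|_{K_v}\in\Foi(K_v)^{[0,1]}$ corresponds under Theorem \ref{DM:t} to a potentially semi-stable $p$-divisible group $\calG_v$, and the unit-root quotient $\calF|_{K_v}$ corresponds, by contravariance, to an étale $p$-divisible subgroup $\calH_v\subseteq\calG_v$. After a finite extension of $K_v$ the group $\calG_v$ acquires semi-stable reduction, so Corollary \ref{local-et-sub:c} (itself resting on Tate's Proposition \ref{good-red-et:p}) shows that $\calH_v$ then has good reduction. Thus $\calH_v$ has potentially good reduction, i.e. its associated representation is potentially unramified, so the local monodromy of $\calF$ at $v$ is finite. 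Running this at every boundary point and invoking Tsuzuki's theorem once more yields that $\calF\otimes\calS$, hence $\calF$, is $\dag$-extendable. For the supplementary claim, suppose $\calE$ extends to a log $F$-isocrystal over $(Y,D)$: then the $p$-divisible group attached to $\calE\otimes\calS$ is an honest log $p$-divisible group over $(Y,D)$, semi-stable (not merely potentially) along $D$, so Corollary \ref{local-et-sub:c} gives good reduction of the étale subgroup at every point of $D$; it therefore extends to an étale $p$-divisible group over $Y$, and $\calF\otimes\calS$ extends to $Y$. Since $\calS^\vee$ also extends to $Y$, the direct-summand argument of Step 1 shows that $\calF$ extends to $Y$.

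\textbf{Main obstacle.} The delicate part is the passage from the curve case to arbitrary smooth $X$: one must verify finite local monodromy at the divisorial points $v$, where the residue field $\kappa_v$ is no longer perfect, and check that the contravariant dictionary ``unit-root quotient $\leftrightarrow$ étale subgroup'' is compatible with restriction to $K_v$. The finite-$p$-basis hypothesis on $k$ is precisely what keeps the local Dieudonné equivalence applicable at these points, and deducing \emph{finite} (rather than trivial) local monodromy from \emph{potential} semi-stability, through Tate's good-reduction result, is the crux that feeds Tsuzuki's criterion.
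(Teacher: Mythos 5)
Your argument follows the same route as the paper's: reduce to $s=0$, use Tsuzuki--Kedlaya to turn $\dag$-extendability of the unit-root quotient into a local finiteness statement, and prove that statement by combining the local overconvergent Dieudonné equivalence (Theorem \ref{DM:t}) with Tate's extension property (Proposition \ref{good-red-et:p}, via Corollary \ref{local-et-sub:c}). Your reduction to $s=0$, tensoring with the rank-$b$ constant object and recovering $\calF$ as a direct summand of $(\calF\otimes\calS)\otimes\calS^\vee$, is correct.

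Two points need repair when $\dim X>1$. First, in Step 2 you check finiteness of local monodromy only at the codimension-one points of a fixed normal compactification; the criterion the paper invokes, \cite[Thm. 2.3.7]{KedSwanII}, requires finiteness of $\rho(f_*(\Gal(\bar{\Omega}/\Omega)))$ for \emph{every} morphism $f\colon\Spec(\Omega_0((t)))\to X$ with $\Omega_0$ algebraically closed, and it is not immediate that your divisorial condition implies this without a further purity or alteration argument. The fix is harmless: your Step 3 applies verbatim over any such $\Omega_0((t))$, since an algebraically closed residue field certainly has a finite $p$-basis, so you should simply run the local computation there instead of at the $K_v$. Second, for the supplementary claim you pass through ``the $p$-divisible group attached to $\calE\otimes\calS$ is an honest log $p$-divisible group over $(Y,D)$''; the global equivalences feeding into this (Proposition \ref{DMIsoc:p} and hence the chain leading to Theorem \ref{DM:t}) are only established in Krull dimension $\leq 1$, so this object is not available for higher-dimensional $Y$ and you must instead localize at each codimension-one point of $D$. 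Moreover, good reduction at those points only extends $\calF$ over an open subset of $Y$ containing all codimension-one points; to cross the codimension-$\geq 2$ strata one needs Zariski--Nagata purity \cite[Thm. X.3.1]{SGA1}, a step your argument omits and which is exactly what the paper adds for this part.
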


\begin{proof}By descent, we may assume the base field algebraically closed and $X$ connected. After taking powers of $F$ and twisting by a rank 1 constant $F$-isocrystal, we may further assume $s=0$. Since $(\calF,\Phi_\calF)$ is unit-root, thanks to \cite[Thm. 2.1]{Crew87}, it corresponds to a finite dimensional $L$-linear continuous representation $\rho\colon \pi_1^\et(X)\to \GL(V)$, where $L$ is a finite extension of $\Qp$. By \cite[Thm. 2.3.7]{KedSwanII} (which is an enhancement of \cite{TsuFLM}), it is enough to prove that for every field $\Omega=\Omega_0((t))$ with $\Omega_0$ algebraically closed and every morphism $f\colon \Spec(\Omega)\to X$, the image $$\rho (f_*(\Gal(\bar{\Omega}/\Omega)))\subseteq \GL(V)$$ is finite. 
	
	\spa
	
	The inverse image of $\calE$ to $\Spec(\Omega)$, denoted by $\calE_\Omega$, is in the essential image of the functor $$\iota\colon \Foi(\Omega)\to \Fisoc(\Omega)$$ and the slopes are in the interval $[0,1]$. By Theorem \ref{DM:t}, there exists a potentially semi-stable \bt group $\calG$ over $\Omega$ such that $\iota(\mathbb{D}^\ag(\calG))\simeq (\calE_\Omega,\Phi_{\calE_\Omega})$. Moreover, by Theorem \ref{t:deJong}, the quotient $\calE_\Omega\twoheadrightarrow \calF_\Omega$ corresponds via $\mathbb{D}_\Qp$ to an étale \bt subgroup $\calH\subseteq \calG$ over $\Omega$, which is unique up to isogeny. By Corollary \ref{local-et-sub:c}, the \bt group $\calH$ has potential good reduction. This implies that $(\calF_\Omega,\Phi_\Omega)$ extends to $\Omega_0[[t]]$ after passing to a finite extension of $\Omega$. We deduce that $\rho (f_*(\Gal(\bar{\Omega}/\Omega)))$ is finite. This proves the first part. The second part is proved analogously combining Corollary \ref{local-et-sub:c} and Zariski--Nagata purity theorem, \cite[Thm. X.3.1]{SGA1}.
\end{proof}
We end the article stating a variant of Theorem \ref{a-UR01:t} with the language of log $p$-divisible groups.
\begin{theo}\label{et-sub-psemi:t}
	Let $(Y,D)$ be a smooth compactification of $X$, where $D$ is a normal crossing divisor, and let $\calG$ be a log $p$-divisible group over $(Y,D)$.  Then every étale $p$-divisible subgroup $\calH\subseteq\calG_X$ extends to a $p$-divisible group over $Y$.
\end{theo}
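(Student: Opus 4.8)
The plan is to reduce the extension of $\calH$ to the codimension-one points of $D$ and then to propagate the result to all of $Y$ by Zariski--Nagata purity. After passing to connected components I may assume $X$, and hence $Y$, connected. Let $\eta_1,\dots,\eta_r$ be the generic points of the irreducible components $D_1,\dots,D_r$ of $D$. Since $Y$ is smooth and $D$ is a normal crossing divisor, each $\calO_{Y,\eta_i}$ is a discrete valuation ring and, near $\eta_i$, the divisor $D$ coincides with $D_i$, so the restriction of $\calM_Y$ is the divisorial log structure; I would pass to the henselisation $R_i$ of $\calO_{Y,\eta_i}$, with fraction field $\Omega_i$. Restricting $\calG$ to $(R_i,M_{R_i})$ gives a log $p$-divisible group over a henselian discrete valuation ring, so Theorem \ref{BWZ:t} shows that $\calG_{\Omega_i}$ has semi-stable reduction. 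As $\calH_{\Omega_i}\subseteq\calG_{\Omega_i}$ is étale, Corollary \ref{local-et-sub:c} then gives that $\calH_{\Omega_i}$ has good reduction over $R_i$.

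The key local step is to promote good reduction to \emph{étale} good reduction. Let $\tilde\calH_i$ denote the model of $\calH_{\Omega_i}$ over the henselian ring $R_i$ and consider its connected-étale sequence $0\to\tilde\calH_i^\circ\to\tilde\calH_i\to\tilde\calH_i^{\et}\to 0$. The generic fibre of $\tilde\calH_i^\circ$ is a sub-$p$-divisible group of the étale group $\calH_{\Omega_i}$, hence étale and thus of dimension zero; since the dimension of a $p$-divisible group is constant on a connected base, the special fibre of $\tilde\calH_i^\circ$ is a $p$-divisible group of dimension zero, hence étale, and being also connected it must be trivial. Therefore $\tilde\calH_i^\circ=0$ and $\tilde\calH_i$ is étale. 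Equivalently, the continuous representation $\rho\colon\pie(X)\to\GL(T)$ on the Tate module $T\coloneqq T_p\calH$ is unramified along every component $D_i$.

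With this in hand I would spread out the étale models $\tilde\calH_i$ to Zariski neighbourhoods of the $\eta_i$. Over the overlaps with $X$ they have the same generic fibre as $\calH$, so they agree with $\calH$ by full faithfulness of restriction to the generic fibre over a normal integral base; hence they glue to an étale $p$-divisible group over an open $U\subseteq Y$ containing $X$ together with all the $\eta_i$, so that $Y\setminus U$ has codimension at least two. In terms of $\rho$ this says that $\rho$ extends to $\pie(U)$. As $Y$ is regular and $Y\setminus U$ has codimension $\geq 2$, Zariski--Nagata purity \cite[Thm. X.3.1]{SGA1} yields $\pie(U)\iso\pie(Y)$, so $\rho$ factors through $\pie(Y)$ and defines an étale $p$-divisible group over $Y$ restricting to $\calH$ on $X$.

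The main obstacle is the local step of the second paragraph: Corollary \ref{local-et-sub:c} only produces a model, and one genuinely has to rule out a nontrivial connected part in order to reach an \emph{étale} extension, which is what makes the fundamental-group formulation available. Once étaleness is secured, the remaining work — spreading out, gluing by uniqueness of models over normal bases, and the purity comparison of fundamental groups across a codimension-two locus — is formal.
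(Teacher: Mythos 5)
Your proposal is correct and is essentially the argument the paper intends: the paper's stated proof deduces the theorem from Theorem \ref{a-UR01:t} via Theorem \ref{KatoEquivalence:t}, but explicitly notes that one can instead ``mimic the proof of Theorem \ref{a-UR01:t} and avoid Dieudonn\'e theory,'' which is exactly what you carry out (Theorem \ref{BWZ:t} and Corollary \ref{local-et-sub:c} at the generic points of $D$, \'etaleness of the local models via local constancy of the dimension, then Zariski--Nagata purity). The only cosmetic point is that the henselisation $R_i$ is a limit of \emph{\'etale}, not Zariski, neighbourhoods of $\eta_i$, so the spreading-out and gluing step should be phrased for an \'etale cover (or directly in terms of $\pie$ and unramifiedness of each $\calH[p^n]$ along $D_i$); this does not affect the argument.
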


\begin{proof}
This result follows from Theorem \ref{a-UR01:t} thanks to Theorem \ref{KatoEquivalence:t}. More directly, one can rather mimic the proof of Theorem \ref{a-UR01:t} and avoid Dieudonné theory.
\end{proof}
\begin{rema}In \cite[Ex. 2.2]{Kos17} and \cite[\S 3]{Hel22}, the authors construct nonconstant étale $p$-divisible subgroups of abelian schemes. These are obtained by looking at the reduction modulo $p$ of certain Shimura varieties of PEL type. Note that, by Theorem \ref{AbelianIntro:t}, the existence of nontrivial étale subgroups implies that the (rationalised) endomorphism algebra of the abelian variety at the generic point has at least one factor of Albert-type IV.
\end{rema}

	\bibliographystyle{ams-alpha}

\end{document}